\documentclass[11pt,reqno]{amsart}
\usepackage{amssymb,amsthm,amsmath,epsfig,latexsym,hyperref,enumerate}
\usepackage{calc}
\usepackage{times}
\usepackage[all]{xy}
\usepackage{tikz}
\usetikzlibrary{matrix}
\usepackage[width=5.65in,height=8.0in,centering]{geometry}

\geometry{margin=1in} 



\newcommand{\K}{{\Bbbk}}   
\newcommand{\A}{{\mathcal A}}  

\newcommand{\proj}{{\mathbb{P}}}
\newcommand{\poin}{Poincar\'{e} }

\newcommand{\cI}{{\mathcal I}}
\newcommand{\surjects}{\twoheadrightarrow}

\newcommand{\rar}{\rightarrow}
\newcommand{\lar}{\longrightarrow}
\newcommand{\llar}{-\kern-5pt-\kern-5pt\longrightarrow}
\DeclareMathOperator{\OT}{OT}  




\newcommand{\B}{{\mathcal B}}

\newcommand{\rees}{{R[It]}}
\newcommand{\fiber}{{\mathcal{F}}(I)}

\newcommand{\recip}{{\mathcal{L}}^{-1}_{\A}}

\newcommand{\fm}{\mathfrak m}
\newcommand{\xx}{\mathbf x}
\sloppy
\newtheorem{defn0}{Definition}[section]
\newtheorem{prop0}[defn0]{Proposition}
\newtheorem{conj0}[defn0]{Conjecture}
\newtheorem{thm0}[defn0]{Theorem}
\newtheorem{lem0}[defn0]{Lemma}
\newtheorem{corollary0}[defn0]{Corollary}
\newtheorem{example0}[defn0]{Example}
\newtheorem{remark0}[defn0]{Remark}
\newtheorem{question0}[defn0]{Question}

\newenvironment{prop}{\begin{prop0}}{\end{prop0}}

\newenvironment{thm}{\begin{thm0}}{\end{thm0}}
\newenvironment{lem}{\begin{lem0}}{\end{lem0}}
\newenvironment{cor}{\begin{corollary0}}{\end{corollary0}}
\newenvironment{exm}{\begin{example0}\rm}{\end{example0}}
\newenvironment{rem}{\begin{remark0}\rm}{\end{remark0}}

\begin{document}



\title{A blowup algebra of hyperplane arrangements}

\author{Mehdi Garrousian, Aron Simis and \c Stefan O. Toh\u aneanu}

\subjclass[2010]{Primary 13A30, 14N20; Secondary 13C14, 13D02, 13D05}
\keywords{Rees algebra, special fiber algebra, Orlik-Terao algebra, Cohen-Macaulay.\\
\indent The second author has been partially supported by a CNPq grant (302298/2014-2) and by a CAPES-PVNS Fellowship (5742201241/2016); he is currently holding a one year Senior Visiting Professorship (2016/08929-7) at the Institute for Mathematical and Computer Sciences, University of S\~{a}o Paulo, S\~{a}o Carlos, Brazil.\\
\indent Garrousian's Address: Department of Mathematics, University of Western Ontario, London, ON N6A 5B7, Canada, Email: mgarrous@alumni.uwo.ca.\\
\indent Simis' Address: Departamento de Matem\'{a}tica, Centro de Ci\^{e}ncias Exatas e da Natureza, Universidade Federal de Pernambuco, 50740-560, Recife, Pernambuco, Brazil, Email: aron@dmat.ufpe.br.\\
\indent Tohaneanu's Address: Department of Mathematics, University of Idaho, Moscow, Idaho 83844-1103, USA, Email: tohaneanu@uidaho.edu, Phone: 208-885-6234, Fax: 208-885-5843.}

\begin{abstract}
\noindent It is shown that the Orlik-Terao algebra is graded isomorphic to the special fiber of the ideal $I$ generated by the $(n-1)$-fold products of the members of a central arrangement of size $n$. This momentum is carried over to the Rees algebra (blowup) of $I$ and it is shown that this algebra is of fiber-type and Cohen-Macaulay. It follows by a result of Simis-Vasconcelos that the special fiber of $I$ is Cohen-Macaulay, thus giving another proof of a result of Proudfoot-Speyer about the Cohen-Macauleyness of the Orlik-Terao algebra.
\end{abstract}
\maketitle


\section*{Introduction}

The central theme of this paper is to study the ideal theoretic aspects of the blowup of a projective space along a certain scheme of codimension $2$. To be more precise, let $\A=\{\ker (\ell_1),\dots,\ker( \ell_n)\}$ be an arrangement of hyperplanes in $\mathbb{P}^{k-1}$ and consider the closure of the graph of the following rational map  
\[
 \mathbb{P}^{k-1} \dashrightarrow  {\mathbb{P}}^{n-1}, \quad x\mapsto (1/\ell_1(x):\cdots :1/\ell_n(x)).
\]
Rewriting the coordinates of the map as forms of the same positive degree in the source $\mathbb{P}^{k-1}={\rm Proj}(\K[x_1,\ldots,x_k])$, we are led to consider the corresponding graded $\K[x_1,\ldots,x_k]$-algebra, namely, the Rees algebra of the ideal of $\K[x_1,\ldots,x_k]$ generated by the $(n-1)$-fold products of $\ell_1,\ldots,\ell_n$.

 It is our view that bringing into the related combinatorics a limited universe of gadgets and numerical invariants from commutative algebra may be of help, specially regarding the typical operations with ideals and algebras. This point of view favors at the outset a second look at the celebrated Orlik-Terao algebra $\K[1/\ell_1,\dots, 1/\ell_n]$ which is regarded as a commutative counterpart to the combinatorial Orlik-Solomon algebra. The fact that the former, as observed by some authors, has a model as a finitely generated graded $\K$-subalgebra of a finitely generated purely transcendental extension of the field $\K$, makes it possible to recover it as the homogeneous coordinate ring of the image of a certain rational map.

This is our departing step to naturally introduce other commutative algebras into the picture. As shown in Theorem \ref{ot_specialfiber}, the Orlik-Terao algebra now becomes isomorphic, as graded $\K$-algebra, to the special fiber algebra (also called fiber cone algebra or central algebra) of the ideal $I$ generated by the $(n-1)$-fold products of the members of the arrangement $\A$.
This algebra is in turn defined as a residue algebra of the Rees algebra of $I$, so it is only natural to look at this and related constructions. One of these constructions takes us to the symmetric algebra of $I$, and hence to the syzygies of $I$.
Since $I$ turns out to be a perfect ideal of codimension $2$, its syzygies are rather simple and allow us to further understand these algebras.

As a second result along this line of approach, we show that a presentation ideal of the Rees algebra of $I$ can be generated by the syzygetic relations and the Orlik-Terao ideal (see Theorem \ref{fiber_type}).
This property has been coined {\em fiber type} property in the recent literature of commutative algebra.

The third main result of this work, as an eventual outcome of these methods, is a proof of the Cohen-Macaulay property of the Rees algebra of $I$ (see Theorem \ref{thm:CM}).

The typical argument in the proofs is induction on the size or rank of the arrangements. Here we draw heavily on the operations of deletion and contraction of an arrangement. In particular, we introduce a variant of a multiarrangement that allows repeated linear forms to be tagged with arbitrarily different coefficients.
Then the main breakthrough consists in getting a precise relation between the various ideals or algebras attached  to the original arrangement and those attached to the minors.

One of the important facts about the Orlik-Terao algebra is that it is Cohen-Macaulay, as proven by Proudfoot-Speyer \cite{PrSp}. Using a general result communicated by the second author and Vasconcelos, we recover this result as a consequence of the Cohen-Macaulay property of the Rees algebra. 


\smallskip

The structure of this paper is as follows. The first section is an account of the needed preliminaries from commutative algebra. The second section expands on highlights of the settled literature about the Orlik-Terao ideal as well as a tangential discussion on the so-called non-linear invariants of our ideals such as the reduction number and analytic spread. The third section focuses on the ideal of $(n-1)$-fold products and the associated algebraic constructions. The last section is devoted to the statements and proofs of the main theorems where we draw various results from the previous sections to establish the arguments. 

\section{Ideal theoretic notions and blowup algebras}\label{algebraic}

The \emph{Rees algebra} of an ideal $I$ in a ring $R$ is the $R$-algebra
\[
\mathcal{R}(I):=\bigoplus_{i\geq 0} I^i.
\]
This is a standard $R$-graded algebra with $\mathcal{R}(I)_0=R$, where multiplication is induced by the internal multiplication rule $I^r I^s\subset I^{r+s}$.
One can see that there is a graded isomorphism $R[It]\simeq \mathcal{R}(I)$, where $R[It]$ is the homogeneous $R$-subalgebra of the standard graded polynomial $R[t]$ in one variable over $R$, generated by the elements $at, a\in I$, of degree $1$.

Quite generally, fixing a set of generators of $I$ determines a surjective homomorphism of $R$-algebras from a polynomial ring over $R$ to $R[It]$.
The kernel of such a map is called a {\em presentation ideal} of $R[It]$.
In this generality, even if $R$ is Noetherian (so $I$ is finitely generated) the notion of a presentation ideal is quite loose.

In this work we deal with a special case in which $R=\K[x_1,\dots, x_k]$ is a standard graded polynomial ring over a field $\K$ and
$I=\langle g_1,\ldots, g_n\rangle$ is an ideal generated by forms $ g_1,\dots, g_n$ of the same degree.
Let $T=R[y_1,\ldots,y_n]=\K[x_1,\dots, x_k; y_1,\ldots,y_n]$, a standard bigraded $\K$-algebra with $\deg x_i=(1,0)$ and $\deg y_j=(0,1)$.
Using the given generators to obtain an $R$-algebra homomorphism
$$\varphi:T=R[y_1,\ldots,y_n]\longrightarrow R[It],\, y_i\mapsto g_it,$$
yields a presentation ideal $\mathcal{I}$ which is bihomogeneous in the bigrading of $T$.
Therefore, $R[It]$ acquires the corresponding bigrading.

Changing $\K$-linearly independent sets of generators in the same degree amounts to effecting an invertible $\K$-linear map, so the resulting effect on the corresponding presentation ideal is pretty much under control.
For this reason, we will by abuse talk about {\em the} presentation ideal of $I$ by fixing a particular set of homogeneous generators of $I$ of the same degree.
Occasionally, we may need to bring in a few superfluous generators into a set of minimal generators.

Since the given generators have the same degree they span a linear system defining a rational map
\begin{equation}\label{map}
	\Phi:{\mathbb{P}}^{k-1} \dashrightarrow  {\mathbb{P}}^{n-1},
\end{equation}
by the assignment $x \mapsto  (g_1(x):\dots : g_n(x))$, when some $g_i(x)\neq 0$.

The ideal $I$ is often called the base ideal (to agree with the base scheme) of $\Phi$.
Asking when $\Phi$ is birational onto its image is of interest and we will briefly deal with it as well.
Again note that changing to another set of  generators in the same degree will not change the linear system thereof, defining the same rational map up to a coordinate change at the target.

The Rees algebra brings along other algebras of interest.
In the present setup, one of them is
the \emph{special fiber}  $\fiber:=\rees\otimes_R R/{\fm}\simeq \oplus_{s\geq 0}I^s/{\fm}I^s$, where $\fm=\langle x_1,\ldots,x_k\rangle\subset R$. The Krull dimension of the special fiber $\ell(I):=\dim \fiber$ is called the {\em analytic spread} of $I$.

The analytic spread is a significant notion in the theory of {\em reductions} of ideals.
An ideal $J\subset I$ is said to be a {\em reduction of $I$} if $I^{r+1}=JI^r$ for some $r$. Most notably, this is equivalent to the condition that the natural inclusion $R[Jt]\hookrightarrow R[It]$ is a finite morphism. The smallest such $r$ is the {\em reduction number $r_J(I)$ with respect to $J$}. The {\em reduction number of $I$} is the infimum of all $r_J(I)$ for all minimal reductions $J$ of $I$; this number is denoted by $r(I)$.

Geometrically, the relevance of the special fiber lies in the following result, which
we isolate for easy reference:
\begin{lem}\label{fiber_principle}
Let $\Phi$ be as in {\rm (\ref{map})} and $I$ its base ideal. Then the homogeneous coordinate ring of the image of $\Phi$ is isomorphic to the special fiber $\fiber$ as graded $\K$-algebras.
\end{lem}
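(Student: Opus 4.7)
The plan is to exhibit an explicit graded $\K$-algebra isomorphism between $\fiber$ and the subring $\K[g_1,\ldots,g_n]\subseteq R$, re-graded so that each $g_i$ has degree one, and then recognize this subring as the homogeneous coordinate ring of the image of $\Phi$.

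First I would consider the $\K$-subalgebra $S:=\K[g_1t,\ldots,g_nt]\subseteq R[t]$ of the Rees algebra $\mathcal{R}(I)=R[It]$. Composing the inclusion $\iota:S\hookrightarrow \mathcal{R}(I)$ with the canonical projection $\pi:\mathcal{R}(I)\twoheadrightarrow \fiber=\mathcal{R}(I)/\fm\mathcal{R}(I)$ produces a graded $\K$-algebra homomorphism $\bar{\iota}:S\to\fiber$. Since $\mathcal{R}(I)$ is generated as an $R$-algebra by $g_1t,\ldots,g_nt$, and $\fiber$ is obtained by killing $\fm$, the residues of the $g_it$ generate $\fiber$ as a $\K$-algebra; hence $\bar\iota$ is surjective.

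The main work is the injectivity of $\bar\iota$, and this is where a degree count carries the argument. An element of $S$ in Rees-degree $s$ has the form $p(g_1,\ldots,g_n)\,t^s$ for some form $p$ of degree $s$ in $n$ variables, and its image in $R$ is a form of degree $sd$, where $d=\deg g_i$. On the other hand, the Rees-degree-$s$ part of $\fm\mathcal{R}(I)$ is $(\fm I^s)\,t^s$, and every nonzero element of $\fm I^s$ has $R$-degree at least $sd+1$. Comparing, if $\bar\iota(p(g_1t,\ldots,g_nt))=0$ then $p(g_1,\ldots,g_n)$ lies in $\fm I^s$, but its $R$-degree is exactly $sd$, forcing $p(g_1,\ldots,g_n)=0$ in $R$ and hence $p(g_1t,\ldots,g_nt)=0$ in $S$.

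Finally I would identify $S$ with the homogeneous coordinate ring of $\overline{\mathrm{Im}(\Phi)}$. The surjection $\K[y_1,\ldots,y_n]\twoheadrightarrow S$ sending $y_i\mapsto g_it$ has kernel equal to the ideal of algebraic relations among $g_1,\ldots,g_n$, which is precisely the homogeneous vanishing ideal of the closure of the image of $\Phi$. Putting this together with $\bar\iota$ gives the desired graded isomorphism. The only delicate point I anticipate is bookkeeping of the two gradings---the standard $R$-grading on $\mathcal{R}(I)$ versus the Rees grading picked up from $t$---but once one is careful that $\fm$ is a subset of $R=\mathcal{R}(I)_0$ and that $S$ meets each Rees-degree strand cleanly, the degree estimate above closes the argument.
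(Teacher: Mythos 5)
Your argument is correct and is essentially the paper's own proof: both identify the homogeneous coordinate ring of the image with $\K[g_1t,\ldots,g_nt]\subset R[It]$ and observe that this subalgebra maps isomorphically onto $\fiber$ because $\fm R[It]$ is a graded complement — your degree count in each Rees-degree strand is exactly the verification that the inclusion splits with $\fm R[It]$ as direct complement, which the paper asserts without detail.
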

To see this, note that the Rees algebra defines a biprojective subvariety of $\proj^{k-1}\times \proj^{n-1}$, namely the closure of the graph of $\Phi$.
Projecting down to the second coordinate recovers the image of $\Phi$.
At the level of coordinate rings this projection corresponds to the inclusion $\K[I_dt]=\K[g_1t,\ldots,g_nt]\subset R[It]$, where $g_1,\ldots, g_n$ are forms of the degree $d$, this inclusion is a split $\K[I_dt]$-module homomorphism with ${\fm}R[It]$ as direct complement. Therefore, one has an isomorphism of $\K$-graded algebras $\K[I_d]\simeq \K[I_dt]\simeq \fiber$.

\medskip

As noted before,  the presentation ideal of $R[It]$
$$\mathcal I=\bigoplus_{(a,b)\in {\mathbb N} \times  {\mathbb N}} {\mathcal I}_{(a,b)},$$
is a bihomogeneous ideal in the standard bigrading of $T$.
Two basic subideals of $\mathcal I$ are $\langle \mathcal I_{(0,-)}\rangle$ and $\langle \mathcal I_{(-,1)}\rangle$.
The two come in as follows.

Consider the natural surjections
\[
\xymatrix{T  \ar@/_1pc/[rr]_{\psi} \ar[r]^{\varphi}  & R[It] \ar[r]^{\otimes_R R/{\mathfrak{m}}} & {\mathcal{F}}(I)
}
\]
where the kernel of the leftmost map is the presentation ideal $\mathcal I$ of $R[It]$.
Then, we have
\[
\fiber \simeq \frac{T}{\ker \psi}\simeq \frac{T}{\langle \ker \varphi|_{(0,-)},\mathfrak{m}\rangle}\simeq\frac{\K[y_1,\ldots,y_n]}{\langle \mathcal I_{(0,-)}\rangle}.
\]
Thus, $\langle \mathcal I_{(0,-)}\rangle$ is the homogeneous defining ideal of the special fiber (or, as explained in Lemma~\ref{fiber_principle}, of the image of the rational map $\Phi$).

As for the second ideal $\langle \mathcal I_{(-,1)}\rangle$, one can see that it coincides with the ideal of $T$  generated by the biforms $s_1y_1+\cdots+s_ny_n\in T$, whenever $(s_1,\ldots,s_n)$ is a syzygy of $g_1,\ldots,g_n$ of certain degree in $R$.
Thinking about the one-sided grading in the $y$'s there is no essential harm in denoting this ideal simply by  $\mathcal I_1$.
Thus, $T/\mathcal I_1$ is a presentation of the symmetric algebra $\mathcal S (I)$ of $I$.
It obtains a natural surjective map of $R$-graded algebras
$$\mathcal S (I)\simeq  T/\mathcal I_1 \surjects T/{\mathcal I} \simeq \mathcal R (I).$$
As a matter of calculation, one can easily show that $\mathcal I=\mathcal I_1:I^{\infty},$ the saturation of $\mathcal I_1$ with respect to $I$.

The ideal $I$ is said to be of {\em linear type} provided $\mathcal I=\mathcal I_1$, i.e., when the above surjecton is injective.
It is said to be of {\em fiber type} if $\mathcal I= \mathcal I_1 + \langle \mathcal I_{(0,-)}\rangle=\langle \mathcal I_1,\, \mathcal I_{(0,-)}\rangle$.

A basic homological obstruction for an ideal to be of linear type is the so-called $G_{\infty}$ condition of Artin-Nagata \cite{ArNa}, also known as the $F_1$ condition \cite{Trento}.
A weaker condition is the so-called $G_s$ condition, for a suitable integer $s$.
All these conditions can be stated in terms of the Fitting ideals of the given ideal or, equivalently, in terms of the various ideals of minors of a syzygy matrix of the ideal.
In this work we will have a chance to use condition $G_k$, where $k=\dim R<\infty$.
Given a free presentation
$$R^m \stackrel{\varphi}{\lar} R^n\lar I \rar 0$$
of an ideal $I\subset R$, the $G_k$ condition for $I$ means that
\begin{equation}\label{G_k in minors}
	{\rm ht}(I_p(\varphi))\geq n-p+1, \; {\rm for }\; p\geq n-k+1,
\end{equation}
where $I_t(\varphi)$ denotes the ideal generated by the $t$-minors of $\varphi$.
Note that nothing is required about the values of $p$ strictly smaller than $n-k+1$ since for such values one has $n-p+1>k=\dim R$, which makes the same bound impossible.

\medskip

A useful method to obtain new generators of $\mathcal I$ from old generators (starting from generators of $\mathcal I_1$) is via Sylvester forms (see \cite[Proposition 2.1]{HoSiVa}), which has classical roots as the name indicates.
It can be defined in quite generality as follows:
let $R:=\K[x_1,\ldots,x_k]$, and let $T:=R[y_1,\ldots,y_n]$ as above. Given $F_1,\ldots,F_s\in \mathcal I$,  let $J$ be the ideal of $R$ generated by all the coefficients of the $F_i$ -- the so-called {\em $R$-content ideal}. Suppose $J=\langle a_1,\ldots,a_q\rangle$, where $a_i$ are forms of the same degree. Then we have the matrix equation:
$$\left[\begin{array}{c} F_1\\ F_2\\\vdots\\F_s
\end{array}\right]={\bf A}\cdot \left[\begin{array}{c} a_1\\ a_2\\\vdots\\a_q
\end{array}\right],$$ where ${\bf A}$ is an $s\times q$ matrix with entries in $T$.

If $q\geq s$ and if the syzygies on $F_i'$s are in $\fm T$, then the determinant of any $s\times s$ minor of ${\bf A}$ is an element of $\mathcal I$. These determinants are called {\em Sylvester forms}.
The main use in this work is to show that the Orlik-Terao ideal is generated by such forms (Proposition~\ref{ot_ideal}).

The last invariant we wish to comment on is the reduction number $r(I)$.
For convenience, we state the following result:
\begin{prop}\label{CMfiber}
With the above notation, suppose that the special fiber $\mathcal{F}(I)$ is Cohen-Macaulay. Then the reduction number $r(I)$ of $I$ coincides with the Castelnuovo-Mumford regularity ${\rm reg}(\mathcal F(I))$ of $\mathcal{F}(I)$.
\end{prop}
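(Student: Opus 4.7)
The plan is to pass from the Rees-algebraic statement about $r(I)$ to a purely graded-algebra statement inside $\mathcal{F}(I)$, and then to exploit Cohen-Macaulayness in the standard way.

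First I would recall that $\mathcal{F}(I)$ is a standard graded $\K$-algebra with $\dim \mathcal{F}(I)=\ell(I)$. Because $I$ is generated by forms of the same degree over the polynomial ring $R$ (so in particular the residue field is infinite), a minimal reduction $J\subseteq I$ exists, is minimally generated by $\ell=\ell(I)$ forms $g_{i_1},\dots, g_{i_\ell}$ of the same degree as the generators of $I$, and their images $\bar g_{i_1},\dots,\bar g_{i_\ell}\in \mathcal{F}(I)_1$ form a homogeneous system of parameters. Conversely, any $\ell$ elements of $I_d$ whose images form a h.s.o.p.\ in $\mathcal{F}(I)$ generate a minimal reduction of $I$. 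This is the bridge I need: the data of a minimal reduction is the same as the data of a linear h.s.o.p.\ in $\mathcal{F}(I)$.

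Next I would rephrase the reduction number in the fiber. Quotienting $I^{r+1}=JI^r$ by $\mathfrak{m}I^{r+1}$ gives
\[
\mathcal{F}(I)_{r+1}=\bigl(J\cdot \mathcal{F}(I)\bigr)_{r+1},
\]
so $r_J(I)$ is exactly the least $r$ such that $\bigl(\mathcal{F}(I)/J\mathcal{F}(I)\bigr)_{s}=0$ for all $s>r$, i.e.\ the top nonvanishing degree of the Artinian graded algebra $A:=\mathcal{F}(I)/J\mathcal{F}(I)$.

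Now invoke the Cohen-Macaulay hypothesis. Because $\bar g_{i_1},\dots,\bar g_{i_\ell}$ are a h.s.o.p.\ in the CM ring $\mathcal{F}(I)$, they form a regular sequence of linear forms. Killing a regular sequence of linear forms preserves Castelnuovo-Mumford regularity, so
\[
\mathrm{reg}(\mathcal{F}(I))=\mathrm{reg}(A).
\]
On the other hand, $A$ is a standard graded Artinian $\K$-algebra, and for such an algebra the regularity coincides with the top nonvanishing degree of $A$. Combining this with the previous paragraph gives
\[
r_J(I)=\mathrm{reg}(A)=\mathrm{reg}(\mathcal{F}(I)).
\]
In particular, the value $r_J(I)$ does not depend on the choice of minimal reduction $J$, so $r(I)=r_J(I)=\mathrm{reg}(\mathcal{F}(I))$.

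The main technical point (and the only nontrivial input) is the dictionary between minimal reductions of $I$ and linear h.s.o.p.'s of $\mathcal{F}(I)$; once this is in place, the two ingredients ``regular sequence of linear forms preserves regularity'' and ``$\mathrm{reg}=\mathrm{top\ degree}$ for standard graded Artinian algebras'' do all the work. No delicate combinatorics of the arrangement enter here, so this proposition is really a general fact about equigenerated ideals whose special fiber is Cohen-Macaulay.
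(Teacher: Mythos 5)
Your proof is correct, and it takes a somewhat different route from the paper's. The paper cites \cite[Proposition 1.85]{Va} for the fact that, when $\mathcal{F}(I)$ is Cohen--Macaulay, $r(I)$ equals the degree of the numerator of the Hilbert series, and then identifies that degree with $\mathrm{reg}(\mathcal F(I))$ by inspecting the top shift in the minimal graded free resolution and using additivity of Hilbert series. You instead unpack the content of that citation: you set up the dictionary between minimal reductions of $I$ and linear homogeneous systems of parameters of $\mathcal{F}(I)$, observe that $r_J(I)$ is the top nonvanishing degree of the Artinian reduction $A=\mathcal{F}(I)/J\mathcal{F}(I)$, and then use that killing a linear regular sequence (which an h.s.o.p.\ is, by the CM hypothesis) preserves regularity, together with $\mathrm{reg}=$ top degree for Artinian standard graded algebras. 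This is more self-contained and makes transparent why the answer is independent of $J$; the paper's version is shorter at the cost of two external inputs (the Vasconcelos proposition and the resolution-theoretic description of regularity). Two small points to tighten: the passage from $I^{r+1}=JI^r$ to $\mathcal{F}(I)_{r+1}=(J\mathcal{F}(I))_{r+1}$ is an equivalence only via graded Nakayama, which you should invoke explicitly for the direction you actually need; and your parenthetical ``so in particular the residue field is infinite'' does not follow from equigeneration --- infiniteness of $\K$ is a standing (implicit) hypothesis needed for minimal reductions generated by $\ell(I)$ elements to exist, and it is equally needed for the paper's citation of \cite{Va}.
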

\begin{proof}
 By \cite[Proposition 1.85]{Va}, when the special fiber is Cohen-Macaulay, one can read $r(I)$ off the Hilbert series. Write
\[
HS(\mathcal{F}(I),s)=\frac{1+h_1s+h_2s^2+\cdots +h_rs^r}{(1-s)^d},
\]
 with $h_r\neq 0$ and $d=\ell(I)$, the dimension of the fiber (analytic spread). Then, $r(I)=r$.

Since $\mathcal F(I)\simeq S/\langle\mathcal I_{(0,-)}\rangle$, where $S:=\K[y_1,\ldots,y_n]$, we have that $\mathcal F(I)$ has a minimal graded $S$-free resolution of length equal to $m:={\rm ht}\, \langle\mathcal I_{(0,-)}\rangle$, and ${\rm reg}(\mathcal F(I))=\alpha-m$, where $\alpha$ is the largest shift in the minimal graded free resolution, occurring also at the end of this resolution. These last two statements mentioned here come from the Cohen-Macaulayness of $\mathcal F(I)$.

The additivity of Hilbert series under short exact sequences of modules, together with the fact that $\displaystyle HS(S^u(-v),s)=u\frac{s^v}{(1-s)^n}$ gives that $r+m=\alpha=m+{\rm reg}(\mathcal F(I))$, so $r(I)={\rm reg}(\mathcal F(I)).$	
\end{proof}

\section{Hyperplane Arrangements}\label{gens}

Let $\A=\{H_1,\ldots,H_n\}\subset\mathbb P^{k-1}$ be a central hyperplane arrangement of size $n$ and rank $k$. Here $H_i=\ker(\ell_i),i=1,\ldots,n$, where each $\ell_i$ is a linear form in $R:=\K[x_1,\ldots,x_k]$ and $\langle\ell_1,\ldots,\ell_n\rangle=\fm:=\langle x_1,\ldots,x_k\rangle$. From the algebraic viewpoint, there is a natural emphasis on the linear forms $\ell_i$ and the associated ideal theoretic notions.

Deletion and contraction are useful operations upon $\A$.
Fixing an index $1\leq i\leq n$, one introduces two new minor arrangements:
\[
 \A'=\A\setminus\{H_i\}\: {\rm (deletion)}, \quad \A''=\A'\cap H_i:=\{H_j\cap H_i\,|\,1\leq j\leq n\, , j\neq i\} \: {\rm (contraction)}.
\]
Clearly, $\A'$ is a subarrangement of $\A$ of size $n-1$ and rank at most $k$, while $\A''$ is an arrangement of size $\leq n-1$ and rank $k-1$.

Contraction comes with a natural multiplicity given by counting the number of hyperplanes of $\A'$ that give the same intersection. A modified version of such a notion will be thoroughly used in this work.

The following notion will play a substantial role in some inductive arguments throughout the paper: $\ell_i$  is called a {\em coloop} if the rank of the deletion $\A'$ with respect to $\ell_i$ is $k-1$, ie. drops by one. This simply means that $\bigcap_{j\neq i} H_j$ is a line rather than the origin in $\mathbb{A}^k$. Otherwise, we say that $\ell_i$ is a {\em non-coloop}.

\subsection{The Orlik-Terao algebra.}\label{OrTe}

One of our motivations is to clarify the connections between the Rees algebra and the Orlik-Terao algebra which is an important object in the theory of hyperplane arrangements. We state the definition and review some of its basic properties below. 

Let $\A\subset\mathbb P^{k-1}$ be a hyperplane arrangement as above. Suppose $c_{i_1}\ell_{i_1}+\cdots+c_{i_m}\ell_{i_m}=0$ is a linear dependency among $m$ of the linear forms defining $\A$,  denoted $D$.  Consider the following homogeneous polynomial  in $S:=\K[y_1,\ldots,y_n]$:
\begin{equation}\label{partial_of_dependency}
\partial D:=\sum_{j=1}^m c_{i_j}\prod_{j\neq k=1}^m y_{i_k}.
\end{equation}
Note that $\deg(\partial D)=m-1$.

The {\em Orlik-Terao algebra of $\A$} is the standard graded $\K$-algebra
\[
\OT(\A):=S/\partial(\A),
\]
where $\partial(\A)$ is the ideal of $S$ generated by $\{\partial D|D\mbox{ a dependency of }\A\}$, with $\partial D$ as in (\ref{partial_of_dependency}) --
called the {\em Orlik-Terao ideal}.
This algebra was introduced in \cite{OrTe} as a commutative analog of the classical combinatorial Orlik-Solomon algebra, in order to answer a question of Aomoto.

The following remark states a few important properties of this algebra.


\begin{rem}\label{OTproperties}\leavevmode
	\begin{enumerate}[i.]
		\item  Recalling that a circuit is a minimally dependent set, one has that $\partial(\A)$ is generated by $\partial C$, where $C$ runs over the circuits of $\A$ (\cite{OrTe2}). In addition, these generators form an universal Gr\"{o}bner basis for $\partial(\A)$ (\cite{PrSp}).
		
		\item  $\OT(\A)$ is Cohen-Macaulay (\cite{PrSp}).
		
		\item  $\OT(\A)\simeq \K[1/\ell_1,\dots, 1/\ell_n]$,  a $k$-dimensional $\K$-subalgebra of the field of fractions $\K(x_1,\ldots,x_k)$ (\cite{ScTo,Te}). The corresponding projective variety is called the {\em reciprocal plane} and it is denoted by $\recip$.
		
		\item  Although the Orlik-Terao algebra is sensitive to the linear forms defining $\A$, its Hilbert series only depends on the underlying combinatorics (\cite{Te}). Let 
		\[
		\pi(\A,s)=\sum_{F\in L(\A)}\mu_{\A}(F)(-s)^{r(F)}.
		\]
		be the {\em \poin polynomial} where $\mu_{\A}$ denotes the M\"{o}bius function, $r$ is the rank function and $F$ runs over the flats of $\A$. Then, we have
		\begin{eqnarray*}
			HS(\OT(\A),s)  =  \pi(\A,\frac{s}{1-s}).
		\end{eqnarray*}
         	See \cite{OrTe} for details and \cite{Te} and \cite{Be} for proofs of the above statement.
		
	\end{enumerate}
\end{rem}

\subsection{Ideals of products from arrangements.}\label{products}

Let $\A=\{\ell_1,\ldots,\ell_n\}$ denote a central arrangement in $R:=\K[x_1,\ldots,x_k]$, $n\geq k$. Denoting $[n]:=\{1,\ldots,n\}$, if $S\subset [n]$, then we set $\ell_S:=\prod_{i\in S}\ell_i, \ell_{\emptyset}:=1$. Also set $S^c:=[n]\setminus S$.

Let $\mathfrak{S}:=\{S_1,\ldots,S_m\}$, where $S_j\subseteq [n]$ are subsets of the same size $e$. We are interested in studying the Rees algebras of ideals of the form
\begin{equation}\label{ideal_of_arbitrary_products}
I_{\mathfrak{S}}:=\langle \ell_{S_1},\ldots,\ell_{S_m}\rangle\subset R.
\end{equation}

\begin{exm}\label{Example_folds}
(i) (The Boolean case) Let $n=k$ and $\ell_i=x_i, i=1,\ldots,k$. Then the ideal $I_{\mathfrak{S}}$ is monomial for any ${\mathfrak{S}}$. In the simplest case where $e=n-1$, it is the ideal of the partial derivatives of the monomial $x_1\cdots x_k$ -- also the base ideal of the classical M\"obius involution. For $e=2$ the ideal becomes the edge ideal of a simple graph with $k$ vertices.
 In general, it gives a subideal of the ideal of paths of a given length on the complete graph and, as such, it has a known combinatorial nature.

 (ii) ($(n-1)$-fold products) Here one takes $S_1:=[n]\setminus\{1\},\ldots,S_n:=[n]\setminus\{n\}.$
 We will designate the corresponding ideal by $I_{n-1}(\A)$.
 This case will be the main concern of the paper and will be fully examined in the following sections.
 
 (iii) ($a$-fold products) This is a natural extension of (ii), where $I_a(\A)$ is the ideal generated by all distinct $a$-products of the linear forms defining $\A$. The commutative algebraic properties of these ideals connect strongly to properties of the linear code built on the defining linear forms (see \cite{AnGaTo}).
 In addition, the dimensions of the vector spaces generated by $a$-fold products give a new interpretation to the Tutte polynomial of the matroid of $\A$ (see \cite{Be}).
 
\end{exm}

We can naturally introduce the {\em reciprocal plane algebra}
\begin{equation}
\mathcal{L}_{\mathfrak S}^{-1}:=\K\left[\frac{1}{\ell_{S_1^c}},\ldots,\frac{1}{\ell_{S_m^c}}\right]
\end{equation}
as a generalized version of the notion mentioned in Remark~\ref*{OTproperties} (iii).

\begin{prop}\label{fiber_general} In the above setup there is a graded isomorphism of $\K$-algebras
	$$\mathcal \K[\ell_{S_1},\ldots,\ell_{S_m}]\simeq\K\left[\frac{1}{\ell_{S_1^c}},\ldots,\frac{1}{\ell_{S_m^c}}\right].$$
\end{prop}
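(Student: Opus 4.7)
The plan is to exhibit both algebras as graded quotients of a common standard-graded polynomial ring via parallel presentations and then verify that the two presentation ideals coincide. Setting $\ell_{[n]}:=\ell_1\cdots\ell_n$, the trivial identity $\ell_{S_j}\cdot\ell_{S_j^c}=\ell_{[n]}$ rewrites, inside the fraction field $\K(x_1,\ldots,x_k)$, as $1/\ell_{S_j^c}=\ell_{S_j}/\ell_{[n]}$. Let $P=\K[y_1,\ldots,y_m]$ be standard graded and consider the two $\K$-algebra surjections
\begin{align*}
\varphi\colon P\twoheadrightarrow \K[\ell_{S_1},\ldots,\ell_{S_m}],\quad &y_j\mapsto \ell_{S_j},\\
\psi\colon P\twoheadrightarrow \K[1/\ell_{S_1^c},\ldots,1/\ell_{S_m^c}],\quad &y_j\mapsto 1/\ell_{S_j^c},
\end{align*}
where each target is standard graded by declaring its displayed generators to have degree $1$. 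It will suffice to show $\ker\varphi=\ker\psi$.

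Distributing the key identity over monomials, for any homogeneous $F\in P_d$ one obtains $\psi(F)=\ell_{[n]}^{-d}\,\varphi(F)$ as elements of $\K(x_1,\ldots,x_k)$. From this, $\ker\varphi\cap P_d=\ker\psi\cap P_d$ for every $d$, and $\ker\varphi$ is already homogeneous since $\varphi$ is a graded map. The one point needing care is homogeneity of $\ker\psi$: assuming $e<n$ (the case $e=n$ forces $\mathfrak{S}=\{[n]\}$ and is degenerate), given $F=\sum_d F_d\in\ker\psi$ one multiplies the relation $\sum_d \ell_{[n]}^{-d}\varphi(F_d)=0$ by $\ell_{[n]}^D$ for $D\gg 0$, obtaining the polynomial identity $\sum_d \ell_{[n]}^{D-d}\varphi(F_d)=0$ in $R$. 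The summand indexed by $d$ is a form of degree $Dn-d(n-e)$; these $R$-degrees are pairwise distinct, so each $\varphi(F_d)$ vanishes and hence so does each $\psi(F_d)$, placing every $F_d$ in $\ker\psi$.

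It then follows that $\ker\varphi=\ker\psi$ is a single homogeneous ideal of $P$, and the common quotient yields the asserted graded isomorphism $\K[\ell_{S_1},\ldots,\ell_{S_m}]\simeq\K[1/\ell_{S_1^c},\ldots,1/\ell_{S_m^c}]$, sending $\ell_{S_j}\mapsto 1/\ell_{S_j^c}$. The only genuine subtlety I expect is the homogeneity check for $\ker\psi$—the reciprocal algebra sits naturally in a field rather than in a graded ring, so homogeneity is not automatic—but the short degree count above dispatches it cleanly.
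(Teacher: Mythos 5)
Your proof is correct and is essentially the paper's argument in different packaging: the paper simply regards both algebras as graded subalgebras of the graded total quotient ring of $R$ (generated in degrees $e$ and $-(n-e)$) and notes that degree-wise multiplication by powers of the total product $\ell_{[n]}$ --- exactly your identity $\psi(F)=\ell_{[n]}^{-d}\varphi(F)$ --- gives the isomorphism. Your explicit check that $\ker\psi$ is homogeneous (and your flagging of the degenerate case $e=n$) is a detail the paper's one-line proof glosses over, but the underlying mechanism is identical.
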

\noindent {\it Proof.} Consider both algebras as homogeneous $\K$-subalgebras of the homogeneous total quotient ring of the standard polynomial ring $R$, generated in degrees $e$ and $-(d-e)$, respectively.
Then multiplication by the total product $\ell_{[d]}$ gives the required isomorphism:
$$\kern5cm\K\left[\frac{1}{\ell_{S_1^c}},\ldots,\frac{1}{\ell_{S_m^c}}\right]\stackrel{\cdot \ell_{[d]} }\longrightarrow \K[\ell_{S_1},\ldots,\ell_{S_m}]. \quad\quad\quad\quad\quad\quad\quad\quad\quad\quad\quad \quad\square$$

\medskip

A neat consequence is the following result:
\begin{thm}\label{ot_specialfiber}
Let $\A$ denote a central arrangement of size $n$, let $\mathfrak{S}:=\{S_1,\ldots,S_m\}$ be a collection of subsets of $[n]$ of the same size and let $I_{\mathfrak{S}}$ be as in {\rm (\ref{ideal_of_arbitrary_products})}.
Then the reciprocal plane algebra $\mathcal{L}_{\mathfrak S}^{-1}$ is isomorphic to the special fiber of the ideal $I_{\mathfrak{S}}$ as graded $\K$-algebras.
In particular, the Orlik-Terao algebra $\OT(\A)$ is graded isomorphic to the special fiber $\fiber$ of the ideal $I=I_{n-1}(\A)$ of $(n-1)$-fold products of $\A$.
\end{thm}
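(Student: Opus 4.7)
The theorem assembles cleanly from the pieces already laid out in the excerpt, so my plan is essentially to stitch them together, being careful that the gradings match on the nose.

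First, I would invoke Lemma \ref{fiber_principle} with the base ideal taken to be $I_{\mathfrak{S}}$ generated by the forms $\ell_{S_1},\ldots,\ell_{S_m}$, which all have the same degree $e=|S_j|$. The lemma identifies $\mathcal{F}(I_{\mathfrak{S}})$ with the homogeneous coordinate ring of the image of the rational map defined by the $\ell_{S_j}$, and by the explicit splitting argument sketched after the lemma, this coordinate ring is exactly the $\K$-subalgebra $\K[\ell_{S_1},\ldots,\ell_{S_m}]$ of $R$, graded so that each $\ell_{S_j}$ has degree one.

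Next I would apply Proposition \ref{fiber_general} to replace $\K[\ell_{S_1},\ldots,\ell_{S_m}]$ by the reciprocal plane algebra $\mathcal{L}_{\mathfrak S}^{-1}=\K[1/\ell_{S_1^c},\ldots,1/\ell_{S_m^c}]$. The proposition produces the explicit isomorphism by multiplying by the total product $\ell_{[n]}$, which converts the generator $1/\ell_{S_j^c}$ into $\ell_{S_j}$. The small detail worth checking here, and the only thing that could trip one up, is that this multiplication map is a genuine graded $\K$-algebra isomorphism with the convention that each $1/\ell_{S_j^c}$ also sits in degree one in $\mathcal{L}_{\mathfrak S}^{-1}$ (not in its ``natural'' degree $-(n-e)$ in the total quotient ring). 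Concatenating the two isomorphisms yields the first claim:
\[
\mathcal{F}(I_{\mathfrak{S}})\;\simeq\;\K[\ell_{S_1},\ldots,\ell_{S_m}]\;\simeq\;\mathcal{L}_{\mathfrak S}^{-1}.
\]

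For the specialization, I would take $\mathfrak{S}=\{S_1,\ldots,S_n\}$ with $S_i=[n]\setminus\{i\}$, so that $I_{\mathfrak S}=I_{n-1}(\A)$ and $S_i^c=\{i\}$, giving $\ell_{S_i^c}=\ell_i$. The general isomorphism then reads
\[
\mathcal{F}(I_{n-1}(\A))\;\simeq\;\K\!\left[\tfrac{1}{\ell_1},\ldots,\tfrac{1}{\ell_n}\right],
\]
and by Remark \ref{OTproperties}(iii) the right-hand side is precisely a presentation of the Orlik-Terao algebra $\OT(\A)$ as a graded $\K$-algebra. This gives the ``in particular'' clause. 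No induction or deletion-contraction argument is needed at this stage, and there is no substantive obstacle: the work has been done in setting up Lemma \ref{fiber_principle} and Proposition \ref{fiber_general}, and the theorem is their immediate common consequence.
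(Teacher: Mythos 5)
Your proposal is correct and follows exactly the paper's own (very short) proof: combine Lemma \ref{fiber_principle} with Proposition \ref{fiber_general}, then specialize to $S_i=[n]\setminus\{i\}$ and invoke Remark \ref{OTproperties}(iii) for the Orlik--Terao identification. Your added care about the degree-one grading convention on the reciprocal generators is a reasonable point the paper leaves implicit, but the argument is the same.
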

\begin{proof}
It follows immediately from Proposition~\ref{fiber_general} and Lemma~\ref{fiber_principle}.
\end{proof}

\begin{rem}\label{Schenck} In the case of the Orlik-Terao algebra, the above result gives an answer to the third question  at the end of \cite{Sc}. Namely, let $k\geq 3$ and consider the rational map $\Phi$ as in (\ref{map}).
Then Theorem \ref{ot_specialfiber} says that the projection of the graph of $\Phi$ onto the second factor  coincides with the reciprocal plane $\recip$ (see Remark~\ref{OTproperties} (iii)).
In addition, the ideal $I:=I_{n-1}(\A)$ has a similar primary decomposition as obtained in \cite[Lemmas 3.1 and 3.2]{Sc}, for arbitrary $k\geq 3$. By \cite[Proposition 2.2]{AnGaTo}, one gets
	\[
	I = \bigcap_{Y\in L_2(\A)}I(Y)^{\mu_{\A}(Y)},
	\]
\end{rem}

Theorem~\ref{ot_specialfiber} contributes additional information on certain numerical invariants and properties in the strict realm of commutative algebra and algebraic geometry.

\begin{cor} \label{corollary} Let $I:=I_{n-1}(\A)$ denote the ideal generated by the $(n-1)$-fold products coming from a central arrangement of size $n$ and rank $k$.
	One has:
	\begin{itemize}
		\item[(a)] The special fiber $\fiber$ of $I$ is Cohen-Macaulay.
		\item[(b)] The analytic spread is $\ell(I)=k$.
		\item[(c)] The map $\Phi$ is birational onto its image.
		\item[(d)] The reduction number is $r(I)=k-1$.
	\end{itemize}
\end{cor}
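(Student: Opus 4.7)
The plan is to deduce all four items from the graded isomorphism $\mathcal{F}(I) \simeq \OT(\A)$ of Theorem \ref{ot_specialfiber}, together with the properties of $\OT(\A)$ collected in Remark \ref{OTproperties} and the general tools of Section \ref{algebraic}. Items (a) and (b) come essentially for free: (a) transports the Cohen-Macaulay property of $\OT(\A)$ recorded in Remark \ref{OTproperties}(ii), while (b) follows from the identification $\OT(\A) \simeq \K[1/\ell_1, \ldots, 1/\ell_n]$ of Remark \ref{OTproperties}(iii), which exhibits $\mathcal{F}(I)$ as a $k$-dimensional $\K$-algebra, so $\ell(I) = \dim \mathcal{F}(I) = k$.

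For (c), part (b) already forces the image of $\Phi$ to be a projective variety of dimension $\ell(I) - 1 = k - 1$, so $\Phi$ is generically finite between varieties of the same dimension. To upgrade this to birationality I would argue on function fields. Under the identification of Theorem \ref{ot_specialfiber}, the pullback $\Phi^*$ sends the coordinate ratio $y_i/y_j$ to $\ell_{S_i}/\ell_{S_j} = \ell_j/\ell_i$, so the image of $\Phi^*$ contains $\K(\ell_j/\ell_i : 1 \le i, j \le n)$. Since $\rk \A = k$, the linear forms $\ell_1, \ldots, \ell_n$ span $R_1$, and hence these ratios already generate the rational function field of $\mathbb{P}^{k-1}$. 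Thus $\Phi^*$ is surjective on function fields and $\Phi$ is birational onto its image.

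For (d), (a) together with Proposition \ref{CMfiber} reduces the assertion to ${\rm reg}(\mathcal{F}(I)) = k - 1$, equivalently ${\rm reg}(\OT(\A)) = k - 1$ by Theorem \ref{ot_specialfiber}. By Remark \ref{OTproperties}(iv) one has $HS(\OT(\A), s) = \pi(\A, s/(1-s))$, which I would rewrite as $P(s)/(1-s)^k$ by clearing denominators. Since $\A$ is central, the \poin polynomial factors as $\pi(\A, t) = (1 + t)\, \bar{\pi}(\A, t)$, so $\pi(\A, -1) = 0$; substituting this back in shows that the coefficient of $s^k$ in $P(s)$ vanishes, giving $\deg P \le k - 1$. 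Equality, and thus the exact value $r(I) = k - 1$, then requires checking that the coefficient of $s^{k-1}$ in $P(s)$ is nonzero, which amounts to a combinatorial input on the leading coefficient of $\bar{\pi}(\A, t)$ tied to the M\"obius value $\mu_\A(\hat 1)$ of the top flat. This sharp degree determination is the main obstacle; by contrast, the birationality in (c) is conceptually routine once the function-field identification is in place, and (a), (b) are immediate consequences of the main isomorphism.
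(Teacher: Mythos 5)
Your items (a) and (b) are proved exactly as in the paper: both are read off from Theorem \ref{ot_specialfiber} together with Remark \ref{OTproperties}(ii) and (iii). For (c) you take a genuinely different route. The paper invokes the birationality criterion of \cite[Theorem 3.2]{DoHaSi}, using that $I$ is linearly presented and that $\ell(I)=k$ is maximal; you instead argue directly on function fields, noting that $\Phi^*(y_i/y_j)=f_i/f_j=\ell_j/\ell_i$ and that, since $\ell_1,\dots,\ell_n$ span $R_1$, these ratios generate the whole function field of $\proj^{k-1}$ (any $x_a/x_b$ is a quotient of $\K$-linear combinations of the $\ell_j/\ell_1$). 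This is correct and arguably preferable: it is elementary, self-contained, and makes transparent why the reciprocal map is birational, whereas the paper's argument buys a uniform criterion applicable beyond this situation at the cost of an external citation.

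For (d) there is a genuine gap, and you have put your finger on exactly where it is. Your reduction of $r(I)$ to $\deg P$ via Proposition \ref{CMfiber}, and the vanishing of $[s^k]P=(-1)^k\pi(\A,-1)$ from the factor $(1+t)$ of the \poin polynomial, are both fine. But the remaining claim $[s^{k-1}]P\neq 0$ is not a formality: one computes $[s^{k-1}]P=(-1)^{k-1}\bar\pi(\A,-1)$, which is Crapo's beta invariant of the underlying matroid, and this vanishes precisely when the matroid is disconnected. In particular, for the Boolean arrangement $n=k$ one has $\partial(\A)=0$, the special fiber is a polynomial ring, and $r(I)=0\neq k-1$ -- so the nonvanishing you need genuinely requires an extra hypothesis (or combinatorial input) and cannot be deduced from centrality alone. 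The paper sidesteps this computation entirely by citing the regularity determination of \cite[Theorem 3.7]{Sc} for the Orlik--Terao algebra. As it stands, your argument establishes only $r(I)\leq k-1$; to finish you would need to prove (or cite) positivity of the beta invariant in the relevant cases, which is precisely the ``combinatorial input'' you flagged but did not supply.
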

\begin{proof} (a) It follows from  from Theorem \ref{ot_specialfiber} via Remark \ref{OTproperties} (ii).
	
	(b) It follows by the same token from \ref{OTproperties} (iii).
	
	(c) This follows from  and \cite[Theorem 3.2]{DoHaSi} since the ideal $I$ is linearly presented (see proof of Lemma \ref{sym_ideal}), and $\ell(I)=k$, maximum possible.
	
	(d) Follows from Part (a), Proposition~\ref{CMfiber}, and \cite[Theorem 3.7]{Sc}.
	
	It may be interesting to remark that, because of this value, in particular the Orlik-Terao algebra is the homogeneous coordinate ring of a variety of minimal degree if and only if $k=2$, in which case it is the homogeneous coordinate ring of the rational normal curve.
\end{proof}

\section{Ideals of $(n-1)$-fold products and their blowup algebras}\label{Basics_on_(n-1)}

As mentioned in Example \ref{Example_folds}, a special case of the ideal $I_{\mathfrak S}$, extending the case of the ideal generated by the $(n-1)$-fold products, is obtained by fixing $a\in\{1,\ldots,n\}$ and considering the collection of all subsets of $[n]$ of cardinality $a$. Then the corresponding ideal is
\[
I_a(\A):=\langle \ell_{i_1}\cdots\ell_{i_a}|1\leq i_1<\cdots<i_a\leq n \rangle\subset R
\]
and is called the ideal generated by the $a$-fold products of linear forms of $\A$. The projective schemes defined by these ideals are known as generalized star configuration schemes. Unfortunately, only  few things are known about these ideals: if $d$ is the minimum distance of the linear code built from the linear forms defining $\A$ and if $1\leq a\leq d$, then $I_a(\A)=\fm^a$ (cf. \cite[Theorem 3.1]{To1}); and the case when $a=n$ is trivial.

In the case where $a=n-1$, some immediate properties are known already, yet the more difficult questions in regard to the blowup and related algebras have not been studied before. These facets, to be throughly examined in the subsequent sections, is our main endeavor in this work.

Henceforth, we will be working with the following  data: $\A$ is an arrangement with $n\geq k$ and for every $1\leq i\leq n$,  we consider the $(n-1)$-fold products of the $n$ linear forms defining the hyperplanes of $\A$
\[
f_i:= \ell_1\cdots \hat{\ell_i}\cdots \ell_n\in R,
\]
and write
\[
I:=I_{n-1}(\A):=\langle f_1,\ldots,f_n\rangle.
\]
Let $T=\K[x_1,\ldots,x_k,y_1,\ldots,y_n]=R[y_1,\ldots,y_n]$ as before and denote by $\mathcal I(\A,n-1)\subset T$ the  presentation ideal of the Rees algebra $R[It]$ corresponding to the generators $f_1,\ldots,f_n$.

\subsection{The symmetric algebra}

Let $\mathcal I_1(\A,n-1)\subset T$ stand for the subideal of $\mathcal I(\A,n-1)$ presenting the symmetric algebra $\mathcal S(I)$ of  $I=I_{n-1}(\A)$.

\begin{lem}\label{sym_ideal} With the above notation, one has:
	\begin{enumerate}
		\item[{\rm (a)}] The ideal $I=I_{n-1}(\A)$ is a perfect ideal of codimension $2$.
		\item[{\rm (b)}] $\mathcal I_1(\A,n-1)=\langle \ell_iy_i-\ell_{i+1}y_{i+1}|1\leq i\leq n-1\rangle $.
		\item[{\rm (c)}] $\mathcal I_1(\A,n-1)$ is an ideal of codimension $k;$ in particular, it is a complete intersection if and only if $n=k$.
	\end{enumerate}
\end{lem}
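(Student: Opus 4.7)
The overall plan is to identify a Hilbert--Burch presentation of $I$, from which (a) and (b) follow essentially by inspection, and then derive (c) by a codimension estimate on the resulting syzygetic ideal.

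For (a), I would start from the tautology $\ell_i f_i = \ell_1\cdots\ell_n = \ell_{i+1} f_{i+1}$, which furnishes the $n-1$ ``consecutive'' linear syzygies $\ell_i e_i - \ell_{i+1}e_{i+1}$ on the generators $f_1,\dots,f_n$. Assemble them as the columns of the bidiagonal $n\times(n-1)$ matrix
\[
M \;=\; \begin{pmatrix} \ell_1 & & & \\ -\ell_2 & \ell_2 & & \\ & -\ell_3 & \ddots & \\ & & \ddots & \ell_{n-1}\\ & & & -\ell_n \end{pmatrix},
\]
and verify by expansion along the deleted row that the $i$-th signed maximal minor of $M$ is exactly $f_i$. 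Before invoking Hilbert--Burch, I must check that $\codim(I)\ge 2$: since $R$ is a UFD, any height-one prime containing $I$ would be generated by an irreducible element dividing every $f_j$; unique factorization forces such a divisor to be a scalar multiple of some $\ell_m$, but $\ell_m\nmid f_m$, a contradiction. Hilbert--Burch then supplies the minimal free resolution
\[
0 \longrightarrow R^{\,n-1} \stackrel{M}{\longrightarrow} R^{\,n} \longrightarrow I \longrightarrow 0,
\]
establishing that $I$ is perfect of codimension~$2$.

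For (b), the resolution above identifies the first syzygy module of $I$ with the free module spanned by the columns of $M$. Translating each column $\ell_i e_i - \ell_{i+1}e_{i+1}$ to the element $\ell_i y_i - \ell_{i+1}y_{i+1}$ of $T$ yields exactly the stated generators of $\mathcal{I}_1(\A,n-1)$, and (by freeness of the syzygy module) these are minimal.

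For (c), I would bound the codimension from above and below. Every generator lies in the prime $\fm T\subset T$, which has height $k$, so $\codim(\mathcal{I}_1(\A,n-1))\le k$. For the matching lower bound I would exhibit a regular sequence of length $k$ inside $\mathcal{I}_1(\A,n-1)$: since $\ell_1,\dots,\ell_n$ span $\fm$, one can reindex so that $\ell_1,\dots,\ell_k$ are linearly independent, and then massage telescoping combinations of the consecutive syzygies $\ell_i y_i - \ell_{i+1}y_{i+1}$ into $k$ elements whose initial $x$-forms (in a suitable monomial order on $T$) constitute a regular sequence in the polynomial ring $T$; a standard depth/Koszul argument then lifts regularity back to the original elements. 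The complete intersection statement then follows by comparing the codimension with the minimal number of generators $n-1$ furnished by (b).

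The main technical hurdle is the lower bound in (c)---producing the explicit length-$k$ regular sequence inside $\mathcal{I}_1(\A,n-1)$; parts (a) and (b) are essentially bookkeeping on the Hilbert--Burch matrix $M$.
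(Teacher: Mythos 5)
Your proposal follows essentially the same route as the paper: the same bidiagonal syzygy matrix (with Hilbert--Burch made explicit where the paper just asserts the matrix has rank $n-1$ and hence is a full syzygy matrix) for (a)--(b), and for (c) the same upper bound via containment in $\fm T$ together with a length-$k$ regular sequence obtained by telescoping the consecutive syzygies and checking initial terms --- the paper's explicit choice is $\{x_iy_i-\ell_{k+1}y_{k+1},\ 1\le i\le k\}$ with revlex, giving initial ideal $\langle x_iy_i\rangle$. The only caveat, shared with the paper's own terse treatment, is that this telescoping needs $n\ge k+1$, so the boundary case $n=k$ (where $\mathcal I_1(\A,n-1)$ has only $k-1$ generators and is the complete-intersection case) must be disposed of separately.
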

\begin{proof} (a) This is well-known, but we give the argument for completeness. Clearly, $I$ has codimension $2$. The following reduced Koszul like relations are syzygies of $I$: $\ell_iy_i-\ell_{i+1}y_{i+1}, 1\leq i\leq n-1$.
They alone form the following matrix of syzygies of $I$:
		$$\varphi=\left[
	\begin{array}{rrrr}
	\ell_1&&&\\
	-\ell_2&\ell_2&&\\
	&-\ell_3&\ddots&\\
	&&\ddots&\ell_{n-1}\\
	&&&-\ell_n
	\end{array}
	\right].$$
	
Since the rank of this matrix is $n-1$, it is indeed a full syzygy matrix of $I$; in particular, $I$  has linear resolution
$$0\longrightarrow R(-n)^{n-1}\stackrel{\varphi}{\longrightarrow} R(-(n-1))^n\longrightarrow I\longrightarrow 0.$$
	
(b) This is an expression of the details of (a).

(c)	
Clearly, $\mathcal I_1(\A,n-1)\subset \fm T$, hence its codimension is at most $k$. Assuming, as we may, that $\{\ell_1,\ldots,\ell_k\}$ is $\K$-linearly independent, we contend that the elements $\mathfrak s:=\{\ell_iy_i-\ell_{i+1}y_{i+1}, 1\leq i\leq k\}$, form a regular sequence.
To see this, we first apply a $\K$-linear automorphism of $R$ to assume that $\ell_i=x_i$, for $1\leq i\leq k$ -- this will not affect the basic ideal theoretic invariants associated to $I$.
Then note that in the set of generators of $\mathcal I_1(\A,n-1)$ the elements of $\mathfrak s$ can be replaced by the following ones:
$\{x_iy_i-\ell_{k+1}y_{k+1}, 1\leq i\leq k\}$.
Clearly, this is a regular sequence -- for example, because $\langle x_iy_i, 1\leq i\leq k\rangle$ is the initial ideal of the ideal generated by this sequence, in the revlex order.
\end{proof}

There are two basic ideals that play a distinguished role at the outset.
In order to capture both in one single blow, we consider the Jacobian matrix of the generators of $\mathcal I_1(\A,n-1)$ given in Lemma~\ref{sym_ideal} (b).
Its transpose turns out to be the stack of two matrices, the first is the Jacobian matrix with respect to the variables $y_1,\ldots, y_n$
 -- which coincides with the syzygy matrix $\phi$ of $I$ as described in the proof of Lemma~\ref{sym_ideal} (a) -- while the second is the Jacobian matrix $B=B(\phi)$ with respect to the variables $x_1,\ldots,x_k$ -- the so-called {\em Jacobian dual matrix} of \cite{jac_dual}.
 The offspring are the respective ideals of maximal minors of these stacked matrices, the first retrieves $I$, while the second gives an ideal $I_k(B)\subset S=\K[y_1,\ldots,y_n]$ that will play a significant role below (see also Proposition~\ref{fiber_type_generic}) as a first crude approximation to the Orlik-Terao ideal.

\begin{prop}\label{structure_of_symmetric}
Let $\mathcal S(I)\simeq T/\mathcal I_1(\A,n-1)$ stand for the symmetric algebra of the ideal $I$ of $(n-1)$-fold products. Then:
\begin{enumerate}
	\item[{\rm (i)}] ${\rm depth}(\mathcal S(I))\leq k+1$.
	\item[{\rm (ii)}] As an ideal in $T$, every minimal prime of $\mathcal S(I)$ is either $\fm T$, the Rees ideal $\mathcal I(\A,n-1)$  or else has the form $(\ell_{i_1},\ldots, \ell_{i_s}, y_{j_1},\ldots, y_{j_t})$, where $2\leq s\leq k-1, t\geq 1$, $\{i_1,\ldots,i_s\}\cap\{j_1,\ldots,j_t\}=\emptyset$, and $\ell_{i_1},\ldots, \ell_{i_s}$ are $\K$-linearly independent.
	\item[{\rm (iii)}] The primary components relative to the minimal primes $\fm=(\xx)T$ and $\mathcal I(\A,n-1)$ are radical$;$ in addition, with the exception of $\fm T$, every minimal prime of $\mathcal S(I)$ contains the ideal $I_k(B)$.
\end{enumerate}
\end{prop}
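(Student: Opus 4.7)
The plan is to exhibit an associated prime of $\mathcal S(I)$ whose quotient has small dimension. Since the Rees ideal $\mathcal I:=\mathcal I(\A,n-1)$ is the kernel of the surjection $T\twoheadrightarrow R[It]$, it is prime and contains $\mathcal I_1:=\mathcal I_1(\A,n-1)$. I would first check that $\mathcal I$ is a \emph{minimal} prime of $\mathcal I_1$: any prime $\mathfrak q$ with $\mathcal I_1\subseteq\mathfrak q\subseteq\mathcal I$ cannot contain $I$ (since $I\not\subseteq\mathcal I$, as each $f_i$ maps to $f_it\neq 0$ in $R[It]$), so picking $f_i\notin\mathfrak q$ and invoking $\mathcal I=\mathcal I_1:I^\infty$ together with localization at $f_i$ forces $\mathcal I\subseteq\mathfrak q$, whence $\mathfrak q=\mathcal I$. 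The standard inequality $\mathrm{depth}(M)\leq\dim T/\mathfrak p$ for every $\mathfrak p\in\mathrm{Ass}(M)$ then yields $\mathrm{depth}(\mathcal S(I))\leq\dim T/\mathcal I=\dim R[It]=k+1$.

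\textbf{Part (ii).} For a minimal prime $\mathfrak p$ of $\mathcal I_1$, I would dichotomize by whether $I\subseteq\mathfrak p$. If not, the preceding argument forces $\mathfrak p=\mathcal I$. If yes, set $S:=\{j:\ell_j\in\mathfrak p\}$. The relation $f_i=\prod_{j\ne i}\ell_j\in\mathfrak p$ for every $i$ together with primality of $\mathfrak p$ force $S\setminus\{i\}\neq\emptyset$ for all $i$, giving $|S|\geq 2$. Applying the generator $\ell_iy_i-\ell_jy_j\in\mathcal I_1\subseteq\mathfrak p$ with $i\in S$ and $j\notin S$ yields $y_j\in\mathfrak p$, so $\mathfrak p\supseteq(\ell_i:i\in S)+(y_j:j\notin S)$. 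Trimming the first summand to a $\K$-linearly independent subset $\{\ell_{i_1},\ldots,\ell_{i_s}\}$ spanning $V_S$ (where $s=\dim V_S$) makes the right-hand side already a prime ideal; minimality forces equality. The case $S=[n]$ collapses to $\fm T$. Otherwise $t:=n-|S|\geq 1$, and $s\leq k-1$ is forced: $s=k$ would make $V_S=R_1$ and hence $\fm T\subsetneq\mathfrak p$, contradicting minimality.

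\textbf{Part (iii), radicality.} For the primary component at $\mathcal I$: commuting colon with localization at $\mathcal I$ and using that $IT_{\mathcal I}=T_{\mathcal I}$ (the unit ideal, since $I\not\subseteq\mathcal I$) gives $(\mathcal I_1)_{\mathcal I}=(\mathcal I_1:I^\infty)_{\mathcal I}=\mathcal I T_{\mathcal I}$, so the component is $\mathcal I$ itself. For the component at $\fm T$ (a minimal prime precisely when $n>k$, so that nontrivial linear dependencies among the $\ell_i$ exist), I would work in $T_{\fm T}$, where every $y_i$ becomes a unit. The relations $\ell_iy_i\equiv\ell_1y_1\pmod{\mathcal I_1}$ express $\ell_i\equiv\ell_1y_1/y_i$; substituting into any nontrivial dependency $\sum_i c_i\ell_i=0$ yields $\ell_1\cdot\bigl(\sum_i c_i\,y_1/y_i\bigr)\equiv 0 \pmod{(\mathcal I_1)_{\fm T}}$. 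The factor in parentheses has numerator $\sum_i c_i\prod_{j\ne i}y_j$ (an Orlik-Terao-like polynomial) whose monomials are pairwise distinct and so admit no cancellation; hence it is nonzero in the residue field $\K(y_1,\ldots,y_n)$ and therefore a unit in $T_{\fm T}$. This forces $\ell_1\in(\mathcal I_1)_{\fm T}$, and chasing every $\ell_i$ via the same relations and then every $x_j$ (using that the $\ell_i$ span $R_1$) yields $(\mathcal I_1)_{\fm T}=\fm T\cdot T_{\fm T}$, proving radicality.

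\textbf{Part (iii), Jacobian dual inclusion, and the main obstacle.} For the final statement, read the defining generators of $\mathcal I_1$ as the entries of the column vector $B\xx$ (this is the defining identity of the Jacobian dual matrix $B$), so $B\xx\in\mathcal I_1\subseteq\mathfrak p$ for every minimal prime $\mathfrak p$. When $\mathfrak p\neq\fm T$, some $x_j\notin\mathfrak p$, so $\xx$ is a nonzero vector over the integral domain $T/\mathfrak p$; the congruence $B\xx\equiv 0\pmod{\mathfrak p}$ exhibits a nontrivial linear dependency among the columns of $B$ over the fraction field of $T/\mathfrak p$, forcing $\mathrm{rank}(B)<k$ and hence $I_k(B)\subseteq\mathfrak p$. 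The hard part is the radicality at $\fm T$: converting an arrangement-level linear dependency into the unit factor $\sum_i c_i y_1/y_i$ in $T_{\fm T}$ is what connects the commutative-algebraic structure of $\mathcal S(I)$ to the Orlik-Terao combinatorics of the arrangement, and it is here that the distinct-monomial-support observation does the heavy lifting.
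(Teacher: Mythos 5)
Your proposal is correct. Parts (i) and (ii) follow essentially the paper's own route: the Rees ideal, being a saturation of $\mathcal I_1(\A,n-1)$, is a minimal (hence associated) prime of $\mathcal S(I)$, which caps the depth by $\dim T/\mathcal I(\A,n-1)=k+1$; and your dichotomy on whether $I\subseteq\mathfrak p$, followed by the propagation $\ell_i\in\mathfrak p\Rightarrow y_j\in\mathfrak p$ through the telescoping generators, is exactly the paper's ``domino effect'', which you simply spell out more explicitly (including the point, worth keeping, that $|S|\geq 2$ gives $s\geq 2$ only because no two forms of $\A$ are proportional). The genuine divergence is in part (iii), for the radicality of the $\fm T$-component. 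The paper establishes the global identity $\mathcal I_1(\A,n-1):I_k(B)^{\infty}=\fm T$ via Cramer's rule applied to the Jacobian dual relation, and then gets both the radicality at $\fm T$ and the inclusion $I_k(B)\subseteq P$ for the remaining minimal primes as formal consequences of that single identity. You instead localize at $\fm T$, where the $y_i$ are units, and observe that a nontrivial dependency forces some $\ell_i$ --- hence, by the telescoping relations, every $\ell_i$ and therefore all of $\fm$ --- into $(\mathcal I_1)_{\fm T}$, the crux being that the Orlik--Terao numerator $\sum_i c_i\prod_{j\neq i}y_j$ has pairwise distinct monomial support and so survives as a unit; this is in effect the localized form of Lemma~\ref{lemma2} and is a perfectly valid, somewhat more hands-on alternative (your caveat that $\fm T$ is only a minimal prime when $n>k$, so that a dependency exists, is exactly the hypothesis needed). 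Your rank argument for $I_k(B)\subseteq\mathfrak p$ over the domain $T/\mathfrak p$ is Cramer's rule in different clothing, so that part coincides with the paper's in substance. What your route buys is independence from the colon identity $\mathcal I_1:I_k(B)^{\infty}=\fm T$; what the paper's buys is a single mechanism covering all components at once.
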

\begin{proof} (i) Since  $\mathcal I(\A,n-1)$ is a prime ideal which is a saturation of $\mathcal I_1(\A,n-1)$ then it is an associated prime of  $\mathcal S(I)$.
Therefore,  ${\rm depth}(\mathcal S(I))\leq \dim \mathcal R(I)=k+1$.

(ii) Since $\mathcal I(\A,n-1)$ is a saturation of $\mathcal I_1(\A,n-1)$ by $I$, one has $\mathcal I(\A,n-1)\, I^t\subset \mathcal I_1(\A,n-1)$, for some $t\geq 1$.
This implies that any (minimal) prime of $\mathcal S(I)$ in $T$ contains either $I$ or $\mathcal I(\A,n-1)$.
By the proof of (i), $\mathcal I(\A,n-1)$ is an associated prime of $\mathcal S(I)$, hence it must be a minimal prime thereof since a minimal prime of $\mathcal S(I)$ properly contained in it would have to contain $I$, which is absurd.

Now, suppose $P\subset T$ is a minimal prime of $\mathcal S(I)$ containing $I$.
One knows by Lemma~\ref{sym_ideal} that $\fm=(\xx)T$ is a minimal prime of $\mathcal S(I)$. Therefore, we assume that $\fm T\not\subset P$.
Since any minimal prime of $I$ is a complete intersection of two distinct linear forms of $\A$ then $P$ contains at least two, and at most $k-1$, linearly independent linear forms of $\A$. On the other hand, since $\mathcal I_1(\A,n-1)\subset P$, looking at the generators of $\mathcal I_1(\A,n-1)$ as in Lemma~\ref{sym_ideal} (b), by a domino effect principle we finally reach the desired format for $P$ as stated.

(iii) With the notation prior to the statement of the proposition, we claim the following equality:
$$ \mathcal I_1(\A,n-1):I_k(B)^{\infty}=\fm T
$$
It suffices to show for the first quotient as $\fm T$ is a prime ideal.
The inclusion $\fm \, I_k(B)\subset \mathcal I_1(\A,n-1)$ is a consequence of the Cramer rule.
The reverse inclusion is obvious because $\mathcal I_1(\A,n-1)\subset \fm T$ implies that $\mathcal I_1(\A,n-1):I_k(B)\subset \fm T:I_k(B)=\fm T$, as $\fm T$ is a prime ideal.

Note that, as a very crude consequence, one has $I_k(B)\subset \mathcal I (\A,n-1)$.

Now, let $\mathcal P(\fm T)$ denote the primary component of $\fm T$ in $\mathcal I_1(\A,n-1)$.
Then
$$\fm T=\mathcal I_1(\A,n-1):I_k(B)^{\infty}\subset \mathcal P(\fm T):I_k(B)^{\infty}=\mathcal P(\fm T).$$
The same argument goes through for the primary component of $\mathcal I (\A,n-1)$ using the ideal $I$ instead of $I_k(B)$.

To see the last statement of the item, let $\mathcal P$ denote the primary component of one of the  remaining minimal primes $P$ of $\mathcal S(I)$.
Since $P:I_k(B)^{\infty}$ is $P$-primary and $\fm\not\subset P$, then by the same token we get that $I_k(B)\subset P$.
\end{proof}

\begin{rem}\label{reducedness}\rm
(a)	It will be shown in the last section that the estimate in (i) is actually an equality.

As a consequence, every associated prime of  $\mathcal S(I)$ viewed in $T$ has codimension at most $n-1$. This will give a much better grip on the minimal primes of the form $\langle \ell_{i_1},\ldots, \ell_{i_s}, y_{j_1},\ldots, y_{j_t}\rangle $. Namely, one must have in addition that $s+t\leq n-1$ and, moreover, due to the domino effect principle, one must have $s=k-1$, hence $t\leq n-k$.

(b) We conjecture that  $\mathcal S(I)$ is reduced.

The property $(R_0)$ of Serre's is easily verified due to the format of the Jacobian matrix as explained before the above proposition.
The problem is, of course, the property $(S_1)$, the known obstruction for the existence of embedded associated primes.
The case where $n=k+1$, is easily determined.
Here the minimal primes are seen to be $\fm$, $\langle x_1,\ldots, x_{k-1}, y_k\rangle $ and the Rees ideal $\langle \mathcal I_1(\A,k), \partial\rangle$, where $\partial$ is the relation corresponding to the unique circuit.
A calculation will show that the three primes intersect in $\mathcal I_1(\A,k)$.
As a side, this fact alone implies that the maximal regular sequence in the proof of Lemma~\ref{sym_ideal} (c) generates a radical ideal.
For $n\geq k+2$ the calculation becomes sort of formidable, but we will prove later on that the Rees ideal is of fiber type.

(c) The weaker question as to whether the minimal component of  $\mathcal S(I)$  is radical seems pliable.
\end{rem}

If the conjectural statement in Remark~\ref{reducedness} (b) is true then, for any linear form $\ell=\ell_i$ the following basic formula holds
$$ \mathcal I_1(\A,n-1):\ell=\mathcal I(\A,n-1)\cap \left(\bigcap_{\ell\notin P} P\right),
$$
where $P$ denotes a minimal prime other that $\fm T$ and $\mathcal I(\A,n-1)$, as described in proposition~\ref{structure_of_symmetric} (i).
Thus one would recover sectors of the Orlik-Terao generators inside this colon ideal.

Fortunately, this latter virtual consequence holds true and has a direct simple proof.
For convenience of later use, we state it explicitly.
Let $\partial(\A|_{\ell})$ denote the subideal of $\partial(\A)$ generated by all polynomial relations $\partial$ corresponding to minimal dependencies (circuits) involving the linear form $\ell\in \A$.

\begin{lem} \label{lemma2} $\partial(\A|_{\ell})\subset \mathcal I_1(\A,n-1):\ell.$
\end{lem}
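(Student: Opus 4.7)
The plan is to prove the inclusion directly by a modular computation. Let $D$ denote a minimal dependency (circuit) involving $\ell$; relabelling the indices, write
\[
D: \; c_{i_1}\ell_{i_1}+\cdots+c_{i_m}\ell_{i_m}=0, \qquad \ell=\ell_{i_s} \text{ for some } 1\leq s\leq m,
\]
so that $\partial D=\sum_{j=1}^m c_{i_j}\prod_{k\neq j}y_{i_k}$. I want to show $\ell_{i_s}\cdot \partial D\in \mathcal I_1(\A,n-1)$.

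The first step is to observe that, although $\mathcal I_1(\A,n-1)$ is minimally generated by the ``consecutive'' binomials $\ell_i y_i-\ell_{i+1}y_{i+1}$, telescoping gives $\ell_i y_i-\ell_j y_j \in \mathcal I_1(\A,n-1)$ for every pair of indices $i,j$. This is the only structural fact about $\mathcal I_1(\A,n-1)$ that I will need.

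Next, I split $\ell_{i_s}\partial D$ according to whether the index of summation $j$ equals $s$ or not. For $j\neq s$, the variable $y_{i_s}$ appears in the monomial $\prod_{k\neq j}y_{i_k}$, so the corresponding term carries the factor $\ell_{i_s}y_{i_s}$; replacing it by $\ell_{i_j}y_{i_j}$ (legal modulo $\mathcal I_1(\A,n-1)$ by the previous step) and factoring out the common monomial $\prod_{k\neq s}y_{i_k}$ yields
\[
\ell_{i_s}\,\partial D \;\equiv\; \Bigl(\prod_{k\neq s}y_{i_k}\Bigr)\sum_{j=1}^m c_{i_j}\ell_{i_j}\pmod{\mathcal I_1(\A,n-1)}.
\]
The inner sum is zero by the defining dependency, and the claim follows.

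There is really no serious obstacle here; the only thing to watch is that the two cases $j=s$ and $j\neq s$ assemble into a single expression with the prefactor $\prod_{k\neq s}y_{i_k}$, which is exactly what makes the dependency $c_{i_1}\ell_{i_1}+\cdots+c_{i_m}\ell_{i_m}=0$ applicable at the end. Since the $\partial D$ with $\ell \in D$ generate $\partial(\A|_\ell)$, this establishes the asserted inclusion.
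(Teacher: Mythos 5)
Your proposal is correct and is essentially the paper's own argument: the paper proves the same identity $\ell\,\partial D=\sum_{j\neq s}c_{i_j}\bigl(\prod_{k\neq j,s}y_{i_k}\bigr)(\ell_{i_s}y_{i_s}-\ell_{i_j}y_{i_j})$ by an explicit telescoping computation, which is exactly your congruence modulo $\mathcal I_1(\A,n-1)$ followed by the dependency, using the same fact that all binomials $\ell_iy_i-\ell_jy_j$ lie in $\mathcal I_1(\A,n-1)$.
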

\begin{proof} Say, $\ell=\ell_1$. Let $D:\, a_1\ell_1+a_2\ell_2+\cdots+a_s\ell_s=0$ be a minimal dependency involving $\ell_1$, for some $3\leq s\leq n$. In particular, $a_i\neq 0,i=1,\ldots,s$. The corresponding generator of $\partial(\A|_{\ell_1})$ is
	$$\partial D:=a_1y_2y_3\cdots y_s+a_2y_1y_3\cdots y_s+\cdots+a_sy_1y_2\cdots y_{s-1}.$$
	
The following calculation is straightforward.
	
	\begin{eqnarray}
	\ell_1\partial D&=&a_1\ell_1y_2y_3\cdots y_s+(\ell_1y_1-\ell_2y_2)(a_2y_3\cdots y_s+\cdots+a_sy_2\cdots y_{s-1})\nonumber\\
	&+&\ell_2y_2(a_2y_3\cdots y_s+\cdots+a_sy_2\cdots y_{s-1})\nonumber\\
	&=&(a_1\ell_1+a_2\ell_2)y_2y_3\cdots y_s+(\ell_1y_1-\ell_2y_2)(a_2y_3\cdots y_s+\cdots+a_sy_2\cdots y_{s-1})\nonumber\\
	&+&\ell_2y_2(a_3y_2y_4\cdots y_s+\cdots+a_sy_2y_3\cdots y_{s-1})\nonumber\\
	&=&(-a_3\ell_3-\cdots-a_s\ell_s)y_2y_3\cdots y_s+(\ell_1y_1-\ell_2y_2)(a_2y_3\cdots y_s+\cdots+a_sy_2\cdots y_{s-1})\nonumber\\
	&+&\ell_2y_2^2(a_3y_4\cdots y_s+\cdots+a_sy_3\cdots y_{s-1})\nonumber\\
	&=&(\ell_1y_1-\ell_2y_2)(a_2y_3\cdots y_s+\cdots+a_sy_2\cdots y_{s-1})+y_2(\ell_2y_2-\ell_3y_3)a_3y_4\cdots y_s\nonumber\\
	&+&\cdots +y_2(\ell_2y_2-\ell_sy_s)a_sy_3\cdots y_{s-1}\nonumber\\
	&=&a_2y_3\cdots y_s(\ell_1y_1-\ell_2y_2)+a_3y_2y_4\cdots y_s(\ell_1y_1-\ell_3y_3)+\cdots+a_sy_2\cdots y_{s-1}(\ell_1y_1-\ell_sy_s).\nonumber
	\end{eqnarray}
	
	Hence the result.
\end{proof}

\subsection{Sylvester forms}

The Orlik-Terao ideal $\partial(\A)$ has an internal structure of classical flavor, in terms of Sylvester forms.

\begin{prop}\label{ot_ideal}  The generators $\partial(\A)$ of the Orlik-Terao ideal are Sylvester forms obtained from the generators of the presentation ideal $\mathcal I_1(\A,n-1)$ of the symmetric algebra of $I$.
\end{prop}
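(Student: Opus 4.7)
The plan is to exhibit each generator $\partial D$ of the Orlik-Terao ideal $\partial(\A)$ as a Sylvester form built from a specific subcollection of the generators of $\mathcal I_1(\A,n-1)$ described in Lemma~\ref{sym_ideal}(b). Since by Remark~\ref{OTproperties}(i) the ideal $\partial(\A)$ is generated by the circuit relations $\partial D$, it suffices to treat a single (minimal) dependency $D$.

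After relabeling the arrangement, I may assume the circuit is $D:c_1\ell_1+\cdots+c_s\ell_s=0$ with all $c_j$ nonzero and involving the forms $\ell_1,\ldots,\ell_s$. By minimality every proper subfamily is $\K$-linearly independent; in particular $\ell_1,\ldots,\ell_{s-1}$ are $\K$-linearly independent, and $\ell_s=-\sum_{j=1}^{s-1}(c_j/c_s)\ell_j$. I would then select the $s-1$ generators $F_j:=\ell_j y_j-\ell_{j+1}y_{j+1}$, for $j=1,\ldots,s-1$, from the list in Lemma~\ref{sym_ideal}(b). Their $R$-content ideal is $\langle\ell_1,\ldots,\ell_s\rangle$, and the circuit relation collapses it to $\langle\ell_1,\ldots,\ell_{s-1}\rangle$, giving exactly $s-1$ minimal generators -- matching the number of chosen $F_j$'s, as required for the Sylvester construction. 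Substituting the expression for $\ell_s$ into $F_{s-1}$ yields an explicit $(s-1)\times(s-1)$ coefficient matrix $\mathbf A$ in $T$ whose first $s-2$ rows are bidiagonal (with $y_i$ on the diagonal and $-y_{i+1}$ on the superdiagonal), and whose last row has entries $(c_j/c_s)y_s$ for $j<s-1$ and $y_{s-1}+(c_{s-1}/c_s)y_s$ in position $s-1$.

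The remaining step is to compute $\det(\mathbf A)$. I would split the last row as a sum of $[0,\ldots,0,y_{s-1}]$ and $[(c_1/c_s)y_s,\ldots,(c_{s-1}/c_s)y_s]$. The first summand contributes $y_1\cdots y_{s-1}$ via an upper-triangular minor. For the second summand, cofactor expansion along the last row produces, for each $j$, a product of an upper-bidiagonal block on columns $<j$ and a lower-bidiagonal block on columns $>j$; the alternating signs from the cofactor expansion and from the diagonal $-y_{i+1}$ entries cancel in pairs, leaving $\sum_{j=1}^{s-1}(c_j/c_s)\,y_1\cdots\widehat{y_j}\cdots y_{s-1}y_s$. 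Multiplying through by the unit $c_s$ gives
\[
c_s\det(\mathbf A)=\sum_{j=1}^{s} c_j\,y_1\cdots\widehat{y_j}\cdots y_s=\partial D,
\]
realizing $\partial D$ (up to the nonzero scalar $c_s$) as the sought Sylvester form. The only real obstacle is the sign bookkeeping in the cofactor expansion, but the two bidiagonal blocks telescope cleanly; once this is checked, every generator of the Orlik-Terao ideal is displayed as a Sylvester form coming from $\mathcal I_1(\A,n-1)$, as claimed.
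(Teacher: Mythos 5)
Your proposal is correct and follows essentially the same route as the paper: it selects the consecutive binomial syzygies $\ell_jy_j-\ell_{j+1}y_{j+1}$ supported on the circuit, uses the dependency to reduce the content ideal to the $s-1$ independent forms $\ell_1,\ldots,\ell_{s-1}$, and identifies $\partial D$ (up to the unit $c_s$) as the determinant of the resulting $(s-1)\times(s-1)$ coefficient matrix — your determinant expansion just makes explicit what the paper asserts as ``the determinant \ldots is $\pm\partial D$.'' The paper additionally verifies directly that $\partial D(f_1,\ldots,f_n)=0$, but this membership in the Rees ideal also follows from the Cramer-rule property of Sylvester forms, so its omission is not a gap.
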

\begin{proof} Let $D$ be a dependency $c_{i_1}\ell_{i_1}+\cdots+c_{i_m}\ell_{i_m}=0$ with all coefficients $c_{i_j}\neq 0$. Let $ f=\prod_{i=1}^n\ell_i$. Evaluating the Orlik-Terao element $\partial D$ on the products we have
\[
\partial D(f_1,\ldots,f_n)=\sum_{j=1}^m c_{i_j}\frac{f^{m-1}}{\Pi_{j\neq k=1}^m \ell_{i_k}}=\sum_{j=1}^m c_{i_j}\frac{f^{m-1}}{\Pi_{k=1}^m \ell_{i_k}} \ell_{i_j}=\frac{f^{m-2}}{\ell_{i_1}\cdots \ell_{i_m}}\left(c_{i_1}\ell_{i_1}+\cdots+c_{i_m}\ell_{i_m}\right)=0.
\]

 Therefore, $\partial D\in \mathcal I(\A,n-1)$, and since $\partial D\in S:=\K[y_1,\ldots,y_n]$, then $\partial D\in \langle \mathcal  I(\A,n-1)_{(0,-)}\rangle$.

\medskip

For the second part, suppose that the minimal generators of $\mathcal I_1(\A,n-1)$ are $$\Delta_1:=\ell_1y_1-\ell_2y_2, \Delta_2:= \ell_2y_2-\ell_3y_3,\ldots,\Delta_{n-1}:=\ell_{n-1}y_{n-1}-\ell_ny_n.$$

Without loss of generality suppose $\ell_j=c_1\ell_1+\cdots+c_{j-1}\ell_{j-1}$ is some arbitrary dependency $D$. We have
$$\left[\begin{array}{l} \Delta_1\\ \Delta_2\\\vdots\\\Delta_{j-1}
\end{array}\right]=\left[
\begin{array}{cccccc}
y_1&-y_2&0&\cdots&0&0\\
0&y_2&-y_3&\cdots&0&0\\
\vdots&\vdots&\vdots& &\vdots &\vdots\\
0&0&0&\cdots&y_{j-2}&-y_{j-1}\\
-c_1y_j&-c_2y_j&-c_3y_j&\cdots&-c_{j-2}y_j&y_{j-1}-c_{j-1}y_j
\end{array}
\right]\cdot \left[\begin{array}{l} \ell_1\\ \ell_2\\\vdots\\\ell_{j-1}
\end{array}\right].$$ The determinant of the $(j-1)\times(j-1)$ matrix we see above is $\pm\partial D$.
\end{proof}

\subsection{A lemma on deletion} \label{del}
In this and the next parts we build on the main tool of an inductive procedure.

Let $\A'=\A\setminus\{\ell_1\}$, and denote $n':=|\A'|=n-1$. We would like to investigate the relationship between the Rees ideal $\mathcal I(\A',n'-1)$ of $I_{n'-1}(\A')$ and the Rees ideal $\mathcal I(\A,n-1)$ of $I_{n-1}(\A)$, both defined in terms of the naturally given generators.

To wit, we will denote the generators of $I_{n'-1}(\A')$ as
\[
f_{12}:= \ell_{[n]\setminus \{1,2\}}, \dots , f_{1n}:= \ell_{[n]\setminus \{1,n\}}.
\]

One can move between the two ideals in a simple manner, which is easy to verify:
 $$I_{n-1}(\A):\ell_1=I_{n'-1}(\A').$$

Note that the presentation ideal $\mathcal I(\A',n'-1)$ of the Rees algebra of $I_{n'-1}(\A')$ with respect to these generators lives in the polynomial subring $T':=R[y_2,\ldots,y_n]\subset T:=R[y_1,y_2,\ldots,y_n]$.
From Lemma~\ref{sym_ideal}, we know that
$$\mathcal I_1(\A',n'-1)T=\langle \ell_2y_2-\ell_3y_3,\ell_3y_3-\ell_4y_4,\ldots,\ell_{n-1}y_{n-1}-\ell_ny_n\rangle T \subset \mathcal I_1(\A,n-1).$$
Likewise, for the Orlik-Terao ideal (which is an ideal in $S':=\K[y_2,\ldots,y_n]\subset S:=\K[y_1,y_2,\ldots,y_n]$), it obtains via Theorem \ref{ot_specialfiber}:
 $$\partial(\A')S=\langle\mathcal I(\A',n'-1)_{(0,-)}\rangle S\subset \partial(\A)=\langle \mathcal  I(\A,n-1)_{(0,-)}\rangle.$$

\begin{lem} \label{lemma1} One has $$\mathcal I(\A,n-1)=\langle \ell_1y_1-\ell_2y_2,\mathcal I(\A',n'-1)\rangle:\ell_1^{\infty}.$$
\end{lem}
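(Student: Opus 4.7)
The plan is a standard double inclusion, with the saturation by $\ell_1$ doing the heavy lifting on one side and a ``kill $y_1$'' reduction doing it on the other.

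\smallskip

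\emph{Inclusion} $\supseteq$. First observe that $\ell_1 y_1-\ell_2 y_2$ is a genuine syzygy of the $f_i$'s (both $\ell_1 f_1$ and $\ell_2 f_2$ equal $\ell_1\ell_2\cdots\ell_n$), so it belongs to $\mathcal I(\A,n-1)$. Next, I would check that $\mathcal I(\A',n'-1)\cdot T\subset \mathcal I(\A,n-1)$: given a $y$-homogeneous $F(x,y_2,\ldots,y_n)\in\mathcal I(\A',n'-1)$ of $y$-degree $e$, the relations $f_i=\ell_1 f_{1i}$ for $i\ge 2$ yield
\[
F(x,f_2 t,\ldots,f_n t)=\ell_1^{e}\,F(x,f_{12}t,\ldots,f_{1n}t)=0.
\]
Hence $\bigl\langle \ell_1 y_1-\ell_2 y_2,\,\mathcal I(\A',n'-1)\bigr\rangle\subset \mathcal I(\A,n-1)$. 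Since $\mathcal I(\A,n-1)$ is prime (it is the kernel of a map to the domain $R[It]$) and $\ell_1\notin\mathcal I(\A,n-1)$, saturating by $\ell_1$ does not enlarge it, giving the desired containment.

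\smallskip

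\emph{Inclusion} $\subseteq$. Let $F\in\mathcal I(\A,n-1)$ be bihomogeneous, of $y$-degree $e$ and $y_1$-degree $d\le e$. Write
\[
F=\sum_{j=0}^{d}F_j(x,y_2,\ldots,y_n)\,y_1^{j},\qquad F_j\in T'.
\]
Modulo the single relation $\ell_1y_1-\ell_2y_2$ I can exchange $\ell_1y_1$ for $\ell_2y_2$. Concretely, multiplying by $\ell_1^{d}$ and using $\ell_1^{j}y_1^{j}\equiv (\ell_2y_2)^{j}\pmod{\ell_1y_1-\ell_2y_2}$ gives
\[
\ell_1^{d}F\equiv G\pmod{\langle \ell_1y_1-\ell_2y_2\rangle},\qquad G:=\sum_{j=0}^{d}\ell_1^{d-j}\ell_2^{j}\,F_j\,y_2^{j}\in T'.
\]
Because both $F$ and $\ell_1y_1-\ell_2y_2$ lie in $\mathcal I(\A,n-1)$, so does $G$. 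But $G$ involves none of $y_1$, so the condition $G(x,f_1t,f_2t,\ldots,f_nt)=0$ collapses to $G(x,f_2t,\ldots,f_nt)=0$; applying $f_i=\ell_1 f_{1i}$ yields $\ell_1^{e}G(x,f_{12}t,\ldots,f_{1n}t)=0$ in $R[t]$, and cancelling $\ell_1^{e}$ (valid in the domain $R[t]$) gives $G\in\mathcal I(\A',n'-1)$. Therefore
\[
\ell_1^{d}F\in\bigl\langle \ell_1y_1-\ell_2y_2,\,\mathcal I(\A',n'-1)\bigr\rangle,
\]
which places $F$ in the stated saturation.

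\smallskip

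The only slightly delicate point is the bookkeeping in the ``kill $y_1$'' step: one must work with a bihomogeneous $F$ (harmless, since $\mathcal I(\A,n-1)$ is bihomogeneous) and verify that the exponent $d$ appearing in $\ell_1^{d}F$ is controlled by the $y_1$-degree of $F$; once that is in place, primality of the Rees ideal and the identities $f_i=\ell_1f_{1i}$ do the rest.
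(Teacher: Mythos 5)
Your proof is correct and follows essentially the same route as the paper's: both directions rest on the identity $\ell_1^{j}y_1^{j}\equiv(\ell_2y_2)^{j}$ modulo $\ell_1y_1-\ell_2y_2$, the relations $f_i=\ell_1f_{1i}$ for $i\ge 2$, and primality of the Rees ideals; your $G$ is exactly the paper's auxiliary polynomial $F'$. The only difference is cosmetic — you reduce $\ell_1^{d}F$ modulo the syzygy first and then evaluate, while the paper evaluates first and then identifies the congruence — and you spell out the containment $\langle\ell_1y_1-\ell_2y_2,\mathcal I(\A',n'-1)\rangle\subset\mathcal I(\A,n-1)$ that the paper treats as clear.
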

\begin{proof}
The inclusion $\langle \ell_1y_1-\ell_2y_2,\mathcal I(\A',n'-1)\rangle:\ell_1^{\infty}\subset  I(\A,n-1)$ is clear since we are saturating a subideal of a prime ideal by an element not belonging to the latter.
We note that the codimension of $\langle \ell_1y_1-\ell_2y_2,\mathcal I(\A',n'-1)\rangle$ exceeds by $1$ that of $I(\A',n'-1)$ since the latter is a prime ideal even after extending to the ambient ring $T$. Therefore, by a codimension counting it would suffice to show that the saturation is itself a prime ideal.

Instead, we choose a direct approach. Thus, let $F\in \mathcal I(\A,n-1)$ be (homogeneous) of degree $d$ in variables $y_1,\ldots,y_n$. We can write $$F=y_1^uG_u+y_1^{u-1}G_{u-1}+ \cdots+y_1G_1+G_0,\, 0\leq u\leq d,$$ where $G_j\in\K[x_1,\ldots,x_k][y_2,\ldots,y_n]$, are homogeneous of degree $d-j$ in $y_2,\ldots,y_n$ for $j=0,\ldots,u$.

Evaluating $y_i=f_i, i=1,\ldots,n$ we obtain

\begin{eqnarray}
0&=&F(f_1,\ldots,f_n)\nonumber\\
&=&\ell_2^uf_{12}^u\ell_1^{d-u}G_u(f_{12},\ldots,f_{1n})+\cdots+\ell_2f_{12}\ell_1^{d-1}G_1(f_{12},\ldots,f_{1n})+\ell_1^dG_0(f_{12},\ldots,f_{1n}). \nonumber
\end{eqnarray}

This means that $$\ell_1^{d-u}\left[\underbrace{\ell_2^uy_2^uG_u(y_2,\ldots,y_n)+\cdots+\ell_1^{u-1}\ell_2y_2G_1(y_2,\ldots,y_n)+\ell_1^uG_0(y_2,\ldots,y_n)}_{F'}\right] \in\mathcal I(\A',n'-1).$$

By writing $\ell_1y_1=\ell_1y_1-\ell_2y_2+\ell_2y_2$, it is not difficult to see that $$\ell_1^uF\equiv F'\mbox{ mod }\langle \ell_1y_1-\ell_2y_2\rangle,$$ hence the result.
\end{proof}


\subsection{Stretched arrangements with coefficients}\label{stretched}
	
Recall the notion of contraction and the inherent idea of a multiarrangement, as mentioned in Section~\ref*{gens}.
Here we wish to consider such multiarrangements, allowing moreover the repeated individual linear functionals corresponding to repeated hyperplanes to be tagged with a nonzero element of the ground field.
For lack of better terminology, we call such a new gadget a {\em stretched arrangement with coefficients}.
Note that, by construction, a stretched arrangement with coefficients $\B$ has a uniquely defined (simple) arrangement $\A$ as support.
Thus, if $\A=\{\ell_1,\dots, \ell_n\}$ is a simple arrangement, then a stretched arrangement with coefficients $\B$ is of the form
\[
\{\underbrace{b_{1,1}\ell_1,\dots,b_{1,m_1}\ell_1}_{H_1=\ker \ell_1},\underbrace{b_{2,1}\ell_2,\dots, b_{2,m_2}\ell_2}_{H_2=\ker \ell_2},\dots, \underbrace{b_{n,1}\ell_n,\dots, b_{n,m_n}\ell_n}_{H_n=\ker \ell_n}\},
\]
where $0\neq b_{i,j}\in \K$ and $H_i=\ker (\ell_i)$ has multiplicity $m_i$ for any $1\leq i\leq n$, and for convenience, we assume that $b_{i,1}=1$. Set $m:=m_1+\cdots+m_n$.
We emphasize the ingredients of a stretched arrangement by writing ${\mathcal B}=(\A, m)$.

Proceeding as in the situation of a simple arrangement, we introduce the collection of $(m-1)$-products of elements of $\B$ and denote $I_{m-1}(\B)$ the ideal of $R$ generated by them.
As in the simple case, we consider the presentation ideal $\mathcal I(\B,m-1)$ of  $I_{m-1}(\B)$ with respect to its set of generators consisting of the $(m-1)$-products. The next lemma relates this ideal to the previously considered presentation ideal $\mathcal I(\A,n-1)$ of $I_{n-1}(\A)$ obtained by taking the set of generators consisting of the $(n-1)$-products of elements of $\A$.

\begin{lem}\label{lem:multiTOsimple}
 Let $\A$ denote an arrangement and let ${\mathcal B}=(\A,m)$ denote a multiarrangement supported on $\A$, as above. Let $G\in R$ stand for the $\gcd$ of the $(m-1)$-products of elements of $\B$. Then:
 \begin{enumerate}
 	\item[{\rm (i)}] The vector of the $(m-1)$-products of elements of $\B$ has the form $G\cdot P_{\A}$, where $P_{\A}$ denotes the vector whose coordinates are the $(n-1)$-products of the corresponding simple $\A$, each such product repeated as many times as the stretching in $\B$ of the corresponding linear form deleted in the expression of the product, and further tagged with certain coefficient
 	\item[{\rm (ii)}] $\mathcal I(\B,m-1)=\langle \mathcal I(\A,n-1), \mathcal D_{\A}\rangle$, where $\mathcal D_{\A}$ denotes the $\K$-linear dependency relations among elements of  $P_{\A}$.	
 \end{enumerate}
\end{lem}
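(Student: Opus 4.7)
My plan for part (i) is a direct bookkeeping exercise. The product of all $m$ elements of $\mathcal B$ is $B\prod_r \ell_r^{m_r}$, where $B := \prod_{r,s} b_{r,s}$, so removing $b_{i,j}\ell_i$ leaves the $(m-1)$-product $(B/b_{i,j})\,\ell_i^{m_i-1}\prod_{r\ne i}\ell_r^{m_r}$, which equals $G\cdot\mu_{i,j}\,f_i$ once one sets $G := \prod_r \ell_r^{m_r-1}$ and $\mu_{i,j}:=B/b_{i,j}$. Because $\ell_r\nmid f_r$ for every $r$, this $G$ is in fact the gcd of the whole vector, establishing (i) with the ``certain coefficient'' on the $(i,j)$-entry being precisely $\mu_{i,j}$.

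For part (ii) I will write $T' := R[y_{i,j}]$ and $T := R[y_1,\ldots,y_n]$ for the ambient rings of $\mathcal I(\mathcal B,m-1)$ and $\mathcal I(\A,n-1)$, respectively, and embed $T\hookrightarrow T'$ via any section of the natural linear surjection $T'\twoheadrightarrow T'/\mathcal D_\A$, for instance $y_i\mapsto y_{i,1}/\mu_{i,1}$. The containment $\supseteq$ is then immediate: a generator $\sum\alpha_{(i,j)}y_{i,j}$ of $\mathcal D_\A$ encodes the vanishing $\sum\alpha_{(i,j)}\mu_{i,j}f_i=0$, so evaluating on $g_{i,j}=G\mu_{i,j}f_i$ yields $G\sum\alpha_{(i,j)}\mu_{i,j}f_i=0$; and any lifted generator of $\mathcal I(\A,n-1)$, being $y$-homogeneous of some degree $d$, will evaluate to $G^d$ times its value on $(f_1,\ldots,f_n)$, which vanishes by definition.

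The substantive inclusion is $\subseteq$. My plan is: given a bihomogeneous $F\in\mathcal I(\mathcal B,m-1)$ of $y$-degree $d$, use the block-internal relations $\mu_{i,j'}y_{i,j}-\mu_{i,j}y_{i,j'}\in\mathcal D_\A$ to reduce $F$ modulo $\mathcal D_\A$ to a polynomial $\bar F(Y_1,\ldots,Y_n)$ involving only one representative $Y_i:=y_{i,1}/\mu_{i,1}$ per block. From part (i) together with $y$-homogeneity, the key identity
\[
F(g_{1,1},\ldots,g_{n,m_n})\;=\;G^d\,\bar F(f_1,\ldots,f_n)
\]
will hold in $R$; since the left side is zero and $G$ is a nonzerodivisor, $\bar F\in\mathcal I(\A,n-1)$, and pulling $\bar F$ back through the chosen section yields $F\in\langle\mathcal I(\A,n-1),\mathcal D_\A\rangle$.

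The main subtlety I anticipate is checking that this last step is independent of the choice of lift: should $f_1,\ldots,f_n$ happen to satisfy $\K$-linear relations crossing different blocks, $\mathcal D_\A$ would collapse $T'$ below $n$ effective $y$-variables, and one has to verify that any such cross-block dependency is already absorbed among the linear $y$-syzygies present in $\mathcal I(\A,n-1)$. In the (typical, e.g.\ Boolean) case where $f_1,\ldots,f_n$ are $\K$-linearly independent the issue evaporates and the argument carries through cleanly; otherwise one needs to massage the section accordingly.
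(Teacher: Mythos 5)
Your argument is correct and is essentially the paper's (very terse) proof written out in full: the paper simply notes that factoring out $G$ identifies the Rees algebra of $I_{m-1}(\B)$ with that of the ideal generated by the entries of $P_{\A}$, after which the redundant, rescaled generators contribute exactly the linear relations $\mathcal D_{\A}$ on top of $\mathcal I(\A,n-1)$ — precisely your reduction modulo $\mathcal D_{\A}$ followed by pulling out $G^d$. The one loose end you flag, possible cross-block $\K$-linear dependencies among $f_1,\dots,f_n$, never occurs: reducing a relation $\sum_i c_if_i=0$ modulo $\ell_i$ kills every term but $c_if_i$ and forces $c_i=0$ for a simple central arrangement (equivalently, the $f_i$ minimally generate the perfect codimension-$2$ ideal $I_{n-1}(\A)$), so $\mathcal D_{\A}$ consists only of block-internal relations and your chosen section is unproblematic.
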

\begin{proof}
	(i) This follows from the definition of a stretched arrangement vis-\`a-vis its support arrangement.
	
	(ii) By (i), the Rees algebra of $I_{m-1}(\B)$ is isomorphic to the Rees algebra of the ideal with generating set $P_{\A}$. By the nature of the latter, the stated result is now clear.
\end{proof}

\section{The main theorems}

We keep the previous notation as in (\ref{del}), where $I_{n-1}(\A)$ is the ideal of $(n-1)$-fold products of a central arrangement $\A$ of size $n$ and rank $k$.
We had $T:=R[y_1,\ldots,y_n]$, with $R:=\K[x_1,\ldots,x_k]$, $S:=\K[y_1,\ldots,y_n]$, and $\mathcal I_1(\A,n-1)\subset \mathcal I(\A,n-1)\subset T$ denote, respectively, the presentation ideals of the symmetric algebra and of the Rees algebra of $I$.
Recall that from Theorem \ref{ot_specialfiber}, the Orlik-Terao ideal  $\partial(\A)$ coincides with the defining ideal $(\mathcal I(\A,n-1)_{(0,-)})S$ of the special fiber algebra of $I$.

\subsection{The case of a generic arrangement.}

Simple conceptual proofs can be given in the case where $\A$ is generic (meaning that any $k$ of the defining linear forms are linearly independent), as follows.

\begin{prop}\label{fiber_type_generic} If $\A=\{\ell_1,\ldots,\ell_n\}\subset R=\K[x_1,\ldots,x_k]$ is a generic arrangement, one has:
	\begin{enumerate}
		\item[{\rm (a)}] $I:=I_{n-1}(\A)$ is an ideal of fiber type.
		\item[{\rm (b)}] The Rees algebra $R[It]$ is Cohen-Macaulay.
		\item[{\rm (c)}] The Orlik-Terao ideal of $\A$ is the $0$-th Fitting ideal of the Jacobian dual matrix of $I$ {\rm (}i.e., the ideal generated by the $k\times k$ minors of the Jacobian matrix of the generators of $\mathcal I_1(\A,n-1)$ with respect to the variables of $R${\rm )}.
        \item[{\rm (d)}] Let $k=n$, ie. the case of Boolean arrangement. Under the standard bigrading $\deg x_i=(1,0)$ and $\deg y_j=(0,1)$, the bigraded Hilbert series of $R[It]$ is
\[
  HS(R[It]; u, v) = \frac{(1-uv)^{k-1}}{(1-u)^k(1-v)^k}.
 \]
	\end{enumerate}
\end{prop}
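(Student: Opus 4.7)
My plan is to derive (a)--(c) from the Artin-Nagata $G_k$ condition of \eqref{G_k in minors} applied to the syzygy matrix $\varphi$ of $I$ described in \lemref{sym_ideal}(a), while (d) will require the stronger $G_\infty$ condition together with a direct Koszul computation. To verify $G_k$ I would analyze the minor ideals $I_p(\varphi)$ by direct inspection. Because $\varphi$ is bidiagonal---column $j$ carrying $\ell_j$ in row $j$ and $-\ell_{j+1}$ in row $j+1$---for any subset $S=\{i_1<\cdots<i_p\}\subseteq[n]$ (with a small adjustment of the column choice when $i_p=n$) the submatrix of $\varphi$ on rows $S$ and columns $S$ is lower triangular with diagonal $\ell_{i_1},\dots,\ell_{i_p}$, so its determinant equals $\pm\ell_{i_1}\cdots\ell_{i_p}$. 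Therefore $I_p(\varphi)$ contains the ideal generated by all products of $p$ distinct $\ell_i$'s, whose vanishing locus in $\mathrm{Spec}(R)$ is the set of points at which at least $n-p+1$ of the $\ell_i$'s vanish simultaneously. The genericity hypothesis (any $k$ of the $\ell_i$'s are linearly independent) forces this locus to have codimension exactly $\min(k,n-p+1)$, so $\mathrm{ht}(I_p(\varphi))\geq n-p+1$ precisely in the range $p\geq n-k+1$; this is $G_k$.

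\emph{Parts (a), (b), (c).} Since $I$ is a perfect ideal of codimension $2$ with linear resolution (\lemref{sym_ideal}(a)) now satisfying $G_k$, the Jacobian-dual machinery of \cite{jac_dual} delivers simultaneously that $\mathcal R(I)$ is Cohen-Macaulay, which is (b), and that the Rees presentation ideal decomposes as $\mathcal I(\A,n-1)=\langle\mathcal I_1(\A,n-1),\,I_k(B)\rangle$. Now $\mathcal I_1(\A,n-1)\subset\fm T$ has no element of bidegree $(0,*)$ while $I_k(B)\subset S$ lies entirely in that bidegree, so comparing with \thmref{ot_specialfiber}---which identifies $\langle\mathcal I(\A,n-1)_{(0,-)}\rangle$ with the Orlik-Terao ideal $\partial(\A)$---forces $\partial(\A)=I_k(B)$, establishing (c). The decomposition itself is exactly the fiber-type assertion of (a).

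\emph{Part (d).} In the Boolean case $k=n$, $\ell_i=x_i$, the triangular extraction of the first paragraph applies for \emph{every} $p\leq n-1$, because any subset of the $x_i$'s is linearly independent; hence $\mathrm{ht}(I_p(\varphi))\geq n-p+1$ for all admissible $p$, giving the $G_\infty$ (equivalently $F_1$) condition. By Artin-Nagata \cite{ArNa}, $I$ is then of linear type, so $\mathcal I(\A,n-1)=\mathcal I_1(\A,n-1)=\langle x_iy_i-x_{i+1}y_{i+1}:1\leq i\leq n-1\rangle$. A revlex leading-term argument shows these $n-1$ biforms of bidegree $(1,1)$ form a regular sequence in $T$, so $\mathcal R(I)=T/\mathcal I_1(\A,n-1)$ is resolved by the Koszul complex on them, and summing bigraded shifts yields
\[
HS(\mathcal R(I);u,v)\;=\;\frac{\sum_{i=0}^{n-1}(-1)^i\binom{n-1}{i}(uv)^i}{(1-u)^n(1-v)^n}\;=\;\frac{(1-uv)^{n-1}}{(1-u)^n(1-v)^n},
\]
which is exactly the stated formula when $n=k$. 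The main technical obstacle is the minor-height computation: genericity is used twice, first to ensure that the triangular minors truly extract all products $\pm\ell_{i_1}\cdots\ell_{i_p}$, and secondly to force the vanishing locus of the product ideal to have the expected codimension $\min(k,n-p+1)$. Once this is pinned down, the rest is either an appeal to Jacobian-dual theory or a direct Koszul calculation.
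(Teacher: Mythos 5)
Your proposal is correct and, for parts (a)--(c), rests on the same pillar as the paper's proof: $I$ is a linearly presented perfect ideal of codimension $2$, one verifies the $G_k$ condition on the bidiagonal syzygy matrix $\varphi$, and then a black-box theorem yields that the Rees ideal is of the expected type $\mathcal I(\A,n-1)=\langle \mathcal I_1(\A,n-1), I_k(B)\rangle$ with $R[It]$ Cohen--Macaulay, whence (a) and (c) follow by the bidegree comparison with \thmref{ot_specialfiber} exactly as you say. Where you genuinely diverge is in the two supporting computations, and in both places your route is more self-contained. For $G_k$, the paper identifies $I_p(\varphi)$ with the ideal $I_p(\A)$ of $p$-fold products and then imports the height from star configurations (\cite{GeHaMi}, \cite{To1}); your triangular extraction of the minors $\pm\ell_{i_1}\cdots\ell_{i_p}$ plus the codimension count $\min(k,n-p+1)$ for the locus where $n-p+1$ forms vanish proves the needed lower bound directly (and the paper's citation-based argument in fact gives equality of heights, which you do not need). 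For (d), the paper applies Robbiano--Valla's bigraded Hilbert series formula to the Jacobian dual matrix, whereas you observe that in the Boolean case $G_\infty$ holds, $I$ is of linear type, and $\mathcal I_1$ is generated by a regular sequence of $n-1$ forms of bidegree $(1,1)$, so the Koszul complex gives the series immediately; this is cleaner and is consistent with \lemref{sym_ideal}(c). Two attribution caveats you should fix: the "expected type plus Cohen--Macaulay under linear presentation and $G_{\dim R}$" theorem is \cite[Theorem 1.3]{MoUr} (the results of \cite{jac_dual} need stronger hypotheses, such as $G_\infty$, which a generic arrangement with $n>k$ does not satisfy); and the implication "perfect grade $2$ plus $G_\infty$ implies linear type" is Huneke's $d$-sequence theorem rather than Artin--Nagata, who only introduced the condition. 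Neither affects the validity of the argument.
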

\begin{proof} 
	As described in the proof of Lemma \ref{sym_ideal}, $I$ is a linearly presented codimension $2$ perfect ideal with syzygy matrix of the following shape
	
	$$\varphi=\left[
	\begin{array}{rrrr}
	\ell_1&&&\\
	-\ell_2&\ell_2&&\\
	&-\ell_3&\ddots&\\
	&&\ddots&\ell_{n-1}\\
	&&&-\ell_n
	\end{array}
	\right].$$
	
	The Boolean case $n=k$ is well-known, so we assume that $\mu(I)=n>k$.
	
	We claim that $I$ satisfies the $G_k$ condition.
	For this purpose we check the requirement in (\ref{G_k in minors}).
	First note that, for $p\geq n-k+1$, one has $$I_p(\varphi)=I_p(\A),$$
	where the rightmost ideal is the ideal generated by all $p$-fold products of the linear forms defining $\A$, as in our earlier notation.
	Because $\A$ is generic, it is the support of the codimension $(n-p+1)$-star configuration $V_{n-p+1}$ (see \cite{GeHaMi}).
By \cite[Proposition 2.9(4)]{GeHaMi}, the defining ideal of $V_{n-p+1}$ is a subset of $I_p(\A)$, hence ${\rm ht}(I_p(\A))\geq n-p+1$. By \cite{To1}, any minimal prime of $I_p(\A)$ can be generated by $n-p+1$ elements. Therefore, ${\rm ht}(I_p(\A))\leq n-p+1$, and hence equality.
	
	
	The three statements now follow from \cite[Theorem 1.3]{MoUr}, where
	(a) and (c) are collected together by saying that $R[It]$ has a presentation ideal of the expected type -- quite stronger than being of fiber type. Note that, as a bonus, \cite[Theorem 1.3]{MoUr} also gives that $\ell(I)=k$ and $r(I)=k-1$, which are parts (b) and (d) in Corollary~\ref{corollary}, when $\A$ is generic.
	
	Part (d) follows from an immediate application of \cite[Theorem 5.11]{RoVa} to the $(k-1)\times k$ matrix
	\[
	 M = \begin{bmatrix} x_1 & 0 & \dots & 0 & x_k \\
	                  0  & x_2 & \dots & 0 & x_k \\
	                  \vdots & &  \ddots & \vdots & x_k \\
	                  0 & 0 & \dots & x_{k-1} & x_k
	 \end{bmatrix}.
	\]

One can verify that the codimension of $I_t(M)$, the ideal of size $t$ minors of $M$, is $k-t+2$. Note that their setup of \cite{RoVa} is different in that they set $\deg y_j=(n-1,1)$, whereas for us $\deg y_j=(0,1)$. To get our formula we make the substitution in their formula: $a\leftrightarrow u$, and $a^{n-1}b \leftrightarrow v$.
\end{proof}

\subsection{The fiber type property.}

In this part we prove one of the main assertions of the section and state a few structural consequences.

\begin{thm}\label{fiber_type}
Let $\A$ be a central arrangement of rank $k\geq 2$ and size $n\geq k$. The ideal  $I_{n-1}(\A)$ of $(n-1)$-fold products of $\A$ is of fiber type:
\[
\mathcal I(\A,n-1)=\langle \mathcal I_1(\A,n-1)\rangle + \langle\mathcal I(\A,n-1)_{(0,-)}\rangle,
\]
as ideals in $T$, where $\langle \mathcal I(\A,n-1)_{(0,-)}\rangle_S =\partial(\A)$ is the Orlik-Terao ideal.

\end{thm}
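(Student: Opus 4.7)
The plan is to induct on $n$, the size of the arrangement. The base case is $n=k$: here the linear forms $\ell_1,\ldots,\ell_n$ are linearly independent, $\partial(\A)=0$, and the claim reduces to the assertion that $I=I_{n-1}(\A)$ is of linear type, which is the Boolean case covered by Proposition~\ref{fiber_type_generic}(a).

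For the inductive step, assume $n>k$ and that the theorem is known for arrangements of smaller size. Since $n$ linear forms in a $k$-dimensional space with $n>k$ cannot all be coloops (a coloop $\ell_i$ would require $\ell_i\notin\mathrm{span}(\ell_j : j\neq i)$, forcing the full collection to be linearly independent), $\A$ contains a non-coloop; pick such an $\ell_1$ and form $\A' := \A\setminus\{\ell_1\}$, which has size $n-1$ and rank $k$. Substituting the inductive hypothesis $\mathcal I(\A',n-2) = \mathcal I_1(\A',n-2) + \partial(\A')T'$ into Lemma~\ref{lemma1}, and using Lemma~\ref{sym_ideal}(b) to recognize that $\mathcal I_1(\A,n-1)$ is generated by $\mathcal I_1(\A',n-2)T$ together with $\ell_1 y_1-\ell_2 y_2$, we obtain
$$
\mathcal I(\A,n-1) = J_1 : \ell_1^\infty, \qquad J_1 := \bigl\langle \mathcal I_1(\A,n-1),\; \partial(\A')\,T\bigr\rangle.
$$

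Set $J_0 := \bigl\langle \mathcal I_1(\A,n-1),\; \partial(\A)\,T\bigr\rangle$, so that $J_1\subset J_0\subset \mathcal I(\A,n-1)$. Because the Rees ideal $\mathcal I(\A,n-1)$ is prime and does not contain the linear form $\ell_1$, saturating the chain by $\ell_1^\infty$ gives $J_0:\ell_1^\infty = \mathcal I(\A,n-1)$. Therefore the theorem is proved as soon as we establish $J_0:\ell_1 = J_0$, i.e.\ that $\ell_1$ is a non-zero-divisor modulo $J_0$. The key input for this is Lemma~\ref{lemma2}, which together with its explicit Sylvester-form proof expresses $\ell_1\cdot\partial D$ as a concrete element of $\mathcal I_1(\A,n-1)$ for every circuit $D$ through $\ell_1$. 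Given $F$ with $\ell_1 F\in J_0$, I would split the $\partial(\A)$ component of $\ell_1 F$ according to the two kinds of circuits---those contributing to $\partial(\A|_{\ell_1})$ and those in $\partial(\A')$---and use Lemma~\ref{lemma2} to absorb the $\ell_1$-multiples of the first kind into $\mathcal I_1(\A,n-1)$, recovering an expression of $F$ itself as an element of $J_0$.

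I expect this final absorption step---peeling off the single factor of $\ell_1$ via the Sylvester identities---to be the main obstacle, since it requires careful bookkeeping with the cofactor expressions inside the proof of Lemma~\ref{lemma2} and a bidegree-by-bidegree analysis in the bigrading of $T$. Once $J_0:\ell_1 = J_0$ is established, iteration yields $J_0 = J_0:\ell_1^\infty = \mathcal I(\A,n-1)$, which closes the induction and gives the fiber type property.
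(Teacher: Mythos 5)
Your reduction is clean and correct as far as it goes: combining Lemma~\ref{lemma1} with the inductive hypothesis for the deletion $\A'$ (and Lemma~\ref{sym_ideal}(b)) does give $\mathcal I(\A,n-1)=J_1:\ell_1^{\infty}$ with $J_1=\langle \mathcal I_1(\A,n-1),\partial(\A')T\rangle$, and the sandwich $J_1\subset J_0\subset \mathcal I(\A,n-1)$ does yield $J_0:\ell_1^{\infty}=\mathcal I(\A,n-1)$. But the proof then stops exactly where the theorem begins: the assertion $J_0:\ell_1=J_0$ is not a bookkeeping exercise --- it is equivalent to the theorem itself, since together with $J_0:\ell_1^\infty=\mathcal I(\A,n-1)$ it immediately gives $J_0=\mathcal I(\A,n-1)$. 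Moreover, the tool you propose for it points the wrong way. Lemma~\ref{lemma2} says $\partial(\A|_{\ell_1})\subset \mathcal I_1(\A,n-1):\ell_1$; it \emph{produces} elements of a colon ideal, it does not bound one. If $\ell_1F=G+\sum_D P_D\,\partial D$ with $G\in\mathcal I_1(\A,n-1)$, the coefficients $P_D$ for circuits $D$ through $\ell_1$ need not be divisible by $\ell_1$, so there is no ``$\ell_1$-multiple of $\partial D$'' to absorb; to conclude $F\in J_0$ you would need to know that $\ell_1$ avoids every associated prime of $J_0$, i.e.\ essentially the primary structure of $J_0$ --- which is unknown precisely because the theorem is not yet proved. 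So the argument is circular at its key step, and you acknowledge as much.

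For comparison, the paper does not attempt to prove that $\ell_1$ is a nonzerodivisor modulo $J_0$. Instead it takes a general bihomogeneous $F\in\mathcal I(\A,n-1)$, expands it in powers of $y_1$, and uses the \emph{contraction} of $\A$ to the hyperplane $\ell_1=0$ (packaged as a stretched arrangement with coefficients, Lemma~\ref{lem:multiTOsimple}) to show that the top $y_1$-coefficient $G_v$ lies in an ideal whose generators can each be rewritten, modulo $\langle \ell_1y_1-\ell_2y_2,\ \partial(\A|_{\ell_1}),\ \mathcal I(\A',n'-1)\rangle$, so as to strictly lower the $y_1$-degree of $F$; iterating lands in the case $v=0$, which is handled by the deletion alone. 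This forces a double induction on the pair $(n,k)$ (the contraction has rank $k-1$), with the rank-two case treated separately via the rational normal curve, and a separate branch for the coloop case. None of this machinery appears in your proposal, and without it (or some genuinely new idea for $J_0:\ell_1=J_0$) the proof is incomplete.
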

\begin{proof} We first consider the case where $n=k$. Then $I_{n-1}(\A)$ is an ideal of linear type by Lemma~\ref{sym_ideal}, that is to say, $\mathcal I(\A,n-1)=\mathcal I_1(\A,n-1)$. This proves the statement of the theorem since $\partial(\A)=0$ in this case.
	
We now prove the statement by induction on the pairs $(n,k)$, where $n> k\geq 2$. In the initial induction step, we deal with the case $k=2$ and arbitrary $n> 2$ (the argument will even be valid for $n=2$).
Here one claims that $I_{n-1}(\A)=\langle x_1,x_2\rangle^{n-1}$.
In fact, since no two forms of the arrangement are proportional, the generators of $I_{n-1}(\A)$ are $\K$-linearly independent -- because, e.g., dehomogenizing in one of the variables yields the first $n$ powers of the other variable up to elementary transformations. Also, since these forms have degree $n-1$, they forcefully span the power $\langle x_1,x_2\rangle^{n-1}$.

Now, any $\langle x_1,x_2\rangle$-primary ideal in $\K[x,y]$  automatically satisfies the property $G_2$ (see (\ref{G_k in minors})).
Therefore, the Rees ideal is of fiber type, and in fact it is of the expected type and Cohen-Macaulay by \cite[Theorem 1.3]{MoUr}. In any case, the Rees ideal has long been known in this case, with the defining ideal of the special fiber generated by the $2$-minors of the generic $2\times (n-1)$ Hankel matrix, i.e., by the homogeneous defining ideal of the rational normal curve in $\mathbb P^{n-1}$ (see \cite{Corsini}).

For the main induction step, suppose $n>k>3$ and let $\A':=\A\setminus\{\ell_1\}$ stand for the deletion of $\ell_1$, a subarrangement of size $n':=n-1$. Applying a change of variables in the base ring $R$ -- which, as already remarked, does not disturb the ideal theoretic properties in sight -- we can assume that $\ell_1=x_1$ and $\ell_2=x_2$.
The following extended ideals  $\mathcal I(\A',n'-1)T,\,\partial(\A')S, \,\mathcal I_1(\A',n'-1)T$ will be of our concern.

The following equalities of ideals of $T$ are easily seen to hold:
\begin{eqnarray}\label{inductive_equalities}
\mathcal I_1(\A,n-1) & = & \langle x_1y_1-x_2y_2,\, \mathcal I_1(\A',n'-1)\rangle \quad \mbox{\rm as ideals in} \;T,\\ \nonumber
\partial(\A) & = &\langle \partial(\A|_{x_1}),\, \partial(\A')\rangle, \quad \mbox{\rm as ideals in} \; S.
\end{eqnarray}

Let $F\in \mathcal I(\A,n-1)$ be bihomogeneous with $\deg_{\bf y}(F)=d$.

Suppose that $M=x_1^ay_1^bN \in T$ is a monomial that appears in $F$, where $x_1,y_1\nmid N$. If $a\geq b$, we can write
\[
M=x_1^{a-b}(x_1y_1-x_2y_2+x_2y_2)^bN,
\]
and hence
\[
M\equiv x_1^{a-b}x_2^by_2^bN\mbox{ mod }\langle x_1y_1-x_2y_2\rangle.
\]
If $a<b$, we have $$M=(x_1y_1-x_2y_2+x_2y_2)^ay_1^{b-a}N,$$ and hence $$M\equiv x_2^ay_2^ay_1^{b-a}N\mbox{ mod }\langle x_1y_1-x_2y_2\rangle.$$

Denote $R':=\K[x_2,\dots, x_k]\subset R, T'':=R'[y_2,\dots, y_n]\subset T':=R[y_2,\dots, y_n]\subset T$.

In any case, one can write $$F=(x_1y_1-x_2y_2)Q+x_1^{m_1}P_1+x_1^{m_2}P_2+\cdots+x_1^{m_u}P_u+P_{u+1}, m_1>\cdots>m_u\geq 1,$$
for certain forms $Q\in T$, $P_1,\ldots,P_u\in T''$, and $P_{u+1}\in R'[y_1,\ldots,y_n]=T''[y_1]$ of degree $d$ in the variables $y_1,\ldots,y_n$.

Also
\[
P_{u+1}=y_1^vG_v+y_1^{v-1}G_{v-1}+\cdots+y_1G_1+G_0,
\]
where $G_j\in T''$ and $\deg(G_j)=d-j,j=0,\ldots,v$.


Let us use the following generators (\ref{del}) for $I_{n'-1}(\A')$:
\[
f_{12}:= \ell_{[n]\setminus \{1,2\}}, \dots , f_{1n}:= \ell_{[n]\setminus \{1,n\}}.
\]

Since evaluating $F\in \mathcal I(\A,n-1)$ at $$y_1\mapsto f_1=x_2\ell_3\cdots\ell_n,\,y_2\mapsto f_2=x_1f_{12},\ldots,\, y_n\mapsto f_n=x_1f_{1n}$$ vanishes,
upon pulling out the appropriate powers of $x_1$, it yields
\begin{eqnarray}
0&=&x_1^{m_1+d}P_1(f_{12},\ldots,f_{1n})+\cdots+x_1^{m_u+d}P_u(f_{12},\ldots,f_{1n})\nonumber\\
&+&f_1^vx_1^{d-v}G_v(f_{12},\ldots,f_{1n})+\cdots+f_1x_1^{d-1}G_1(f_{12},\ldots,f_{1n})+x_1^dG_0(f_{12},\ldots,f_{1n}).\nonumber
\end{eqnarray}

Suppose first that the rank of $\A'$ is $k-1$, i.e. $x_1$ is a coloop. This means that $x_2=\ell_2,\ell_3,\ldots,\ell_n$ are actually forms in the subring $R'=\K[x_2,\ldots.x_k]$. Since $m_1+d>\cdots>m_u+d>d>\cdots>d-v$, $$P_i(f_{12},\ldots,f_{1n})=0, i=1,\ldots,u,\quad G_j(f_{12},\ldots,f_{1n})=0,j=0,\ldots,v.$$
Therefore, $P_i,G_j\in\mathcal I(\A',n'-1)$, and hence $F\in \langle x_1y_1-x_2y_2,\mathcal I(\A',n'-1)\rangle$.
This shows that
$$\mathcal I(\A,n-1)=\langle x_1y_1-x_2y_2,\mathcal I(\A',n'-1)\rangle$$
and the required result follows by the inductive hypothesis as applied to $\mathcal I(\A',n'-1)$.

Suppose now that the rank of $\A'$ does not drop, i.e. $x_1$ is a non-coloop.

\smallskip

{\bf Case 1.} $v=0$. In this case, after canceling $x_1^d$, we obtain $$0=x_1^{m_1}P_1(f_{12},\ldots,f_{1n})+\cdots+x_1^{m_u}P_u(f_{12},\ldots,f_{1n})+G_0(f_{12},\ldots,f_{1n}).$$
Thus,
$$x_1^{m_1}P_1+x_1^{m_2}P_2+\cdots+x_1^{m_u}P_u+P_{u+1}\in \mathcal I(\A',n'-1).$$

{\bf Case 2.} $v\geq 1$. In this case we cancel the factor $x_1^{d-v}$  in the above equation. This will give $$x_1|G_v(f_{12},\ldots,f_{1n}).$$

At this point we resort to the idea of stretched arrangements with coefficients as developed in Subsection~\ref{stretched}.
Namely, we take the restriction (contraction) of $\A$ to the hyperplane $x_1=0$.
Precisely, say
$$\ell_i=a_ix_1+\bar{\ell}_i,\mbox{ where }\bar{\ell_i}\in R',a_i \in \K,$$
for $i=2,\ldots,n$.
Note that $a_2=0$ since $\ell_2=x_2$.

Write
\[
\bar{\mathcal A}=\{\bar{\ell}_2,\ldots,\bar{\ell}_n\}\subset R'.  
\]
a stretched arrangement of total multiplicity $\bar{n}=n-1$ with support $\A''$ of size $n''\leq \bar{n}$.

Likewise, let
\[
\bar{f}_{12}:=\bar{\ell}_3\cdots\bar{\ell}_n,\ldots,\bar{f}_{1n}:=\bar{\ell}_2\cdots\bar{\ell}_{n-1}
\]
denote the $(\bar{n}-1)$-products of this stretched arrangement.
Then, $G_v$ vanishes on the tuple $(\bar{f}_{12},\ldots,\bar{f}_{1n})$ and since its is homogeneous it necessarily belong to $\mathcal I(\bar{\A},\bar{n}-1).$

From Lemma \ref{lem:multiTOsimple}, we have
\[
\cI(\bar{\A},\bar{n}-1)=\langle \cI({\A}'',n''-1),\,\mathcal D_{\bar{\A}}\rangle,
\]
where $\mathcal D_{\bar{\A}}$ is a linear ideal of the form $\langle y_i-b_{i,j} y_j\rangle_{2\leq i,j\leq n}$.

\smallskip

Let us analyze the generators of $\mathcal I(\bar{\A},\bar{n}-1)$.

\smallskip

$\bullet$ A generator $y_i-b_{i,j} y_j, i,j\geq 2$ of $\mathcal D_{\bar{\A}}$ comes from the relation $\bar{\ell}_j=b_{i,j}\bar{\ell}_i,b_{i,j}\in\K$. Thus, back in $\A$ we have the minimal dependency
\[
\ell_j-a_jx_1=b_{i,j}(\ell_i-a_ix_1),
\]
yielding an element of $\partial(\A|_{\ell_1})$:

\[
y_1(y_i-b_{i,j}y_j)+\underbrace{(b_{i,j}a_i-a_j)}_{c_{i,j}}y_iy_j.
\]

$\bullet$ Since $\gcd(\bar{\ell}_i,\bar{\ell}_j)=1$, for $2\leq i<j\leq n''+1$, a typical generator of $\mathcal I_1(\A'',n''-1)$ is $\bar{\ell}_iy_i-\bar{\ell}_jy_j$, that we will rewrite as $$\bar{\ell}_iy_i-\bar{\ell}_jy_j=(\ell_iy_i-\ell_jy_j)-x_1(a_iy_i-a_jy_j).$$

$\bullet$ A typical generator of $\partial(\A'')$ is of the form $b_1y_{i_2}\cdots y_{i_s}+\cdots+b_sy_{i_1}\cdots y_{i_{s-1}}$ coming from a minimal dependency
\[
b_1\bar{\ell}_{i_1}+\cdots+b_s\bar{\ell}_{i_s}=0, i_j\in\{2,\ldots,n''+1\}.
\]
Since $\bar{\ell}_{i_j}=\ell_{i_j}-a_{i_j}x_1$, we obtain a dependency
\[
b_1\ell_{i_1}+\cdots+b_s\ell_{i_s}-(\underbrace{b_1a_{i_1}+\cdots+b_sa_{i_s}}_{\alpha})x_1=0.
\]
 If $\alpha=0$, then
\[
b_1y_{i_2}\cdots y_{i_s}+\cdots+b_sy_{i_1}\cdots y_{i_{s-1}}\in\partial(\A'),
\]
whereas if $\alpha\neq0$, then
\[
 -\alpha y_{i_2}\cdots y_{i_s} +y_1(b_1y_{i_2}\cdots y_{i_s}+\cdots+b_sy_{i_1}\cdots y_{i_{s-1}})\in\partial(\A|_{\ell_1}).
\]

\medskip

We have that $$G_v=\sum E_{s,t}(y_s-b_{s,t}y_t)+\sum A_{i,j}(\bar{\ell}_iy_i-\bar{\ell}_jy_j)+\sum B_{i_1,\ldots,i_s}(b_1y_{i_2}\cdots y_{i_s}+\cdots+b_sy_{i_1}\cdots y_{i_{s-1}}),$$ where $E_{s,t},A_{i,j},B_{i_1,\ldots,i_s}\in T''$ and $s,t,i,j,i_k\geq 2$. Then, by using the expressions in the three bullets above and  splicing according to the equality $x_1y_1=(x_1y_1-x_2y_2)+x_2y_2$, we get:

\begin{eqnarray*}
y_1^vG_v&=&y_1^{v-1}(\underbrace{\sum E_{s,t}(y_1(y_s-b_{s,t}y_t)+c_{s,t}y_sy_t)}_{\in\partial(\A|_{\ell_1})}-\underbrace{\sum E_{s,t}c_{s,t}y_sy_t}_{\in T''}\\
&+&\underbrace{\sum A_{i,j}y_1(\ell_iy_i-\ell_jy_j)}_{\in \mathcal I(\A',n'-1)}-\underbrace{\sum A_{i,j}(x_1y_1-x_2y_2)(a_iy_i-a_jy_j)}_{\in \langle x_1y_1-x_2y_2\rangle}-\underbrace{\sum A_{i,j}x_2y_2(a_iy_i-a_jy_j)}_{\in T''}\\
&+&\underbrace{\sum B_{i_1,\ldots,i_s}(y_1(b_1y_{i_2}\cdots y_{i_s}+\cdots+b_sy_{i_1}\cdots y_{i_{s-1}})-\alpha y_{i_2}\cdots y_{i_s})}_{\in \partial(\A|_{\ell_1})}+ \underbrace{\sum B_{i_1,\ldots,i_s}\alpha y_{i_2}\cdots y_{i_s}}_{\in T''}).
\end{eqnarray*}

Thus, $y_1^vG_v=y_1^{v-1}G'_{v-1}+W$, where
\[
G'_{v-1}\in T'',\quad  W\in\langle x_1y_1-x_2y_2,\,\partial(\A|_{\ell_1}),\,\mathcal I(\A',n'-1)\rangle.
\]

Then returning to our original $F$, it obtains $$F=\Delta+x_1^{m_1}P_1+\cdots+x_1^{m_u}P_u+y_1^{v-1}(\underbrace{G'_{v-1}+G_{v-1}}_{G''_{v-1}\in S''})+ y_1^{v-2}G_{v-2}+\cdots+G_0,$$ where $\Delta\in \langle x_1y_1-x_2y_2,\partial(\A|_{\ell_1}),\mathcal I(\A',n'-1)\rangle\subset \mathcal I(\A,n-1)$.

The key  is that modulo the ideal $\langle x_1y_1-x_2y_2,\partial(\A|_{\ell_1}),\mathcal I(\A',n'-1)\rangle$ the power of $y_1$ dropped from $v$ to $v-1$ in the expression of $F$. Iterating, with $F(f_1,\ldots,f_n)=0=\Delta(f_1,\ldots,f_n)$, will eventually drop further the power of $y_1$ to $v-2$. Recursively we end up with $v=0$, which is Case 1 above.

This way, we eventually get
\[
\mathcal I(\A,n-1)=\langle x_1y_1-x_2y_2,\partial(\A|_{\ell_1}),\mathcal I(\A',n'-1)\rangle.
\]

By the inductive hypothesis as applied to $\mathcal I(\A',n'-1)$ and from the two equalities in (\ref{inductive_equalities}), one gets the stated result.
\end{proof}

\begin{cor}\label{corollary1} $\mathcal I(\A',n'-1)=\mathcal I(\A,n-1)\cap T'$
as ideals in $T'=R[y_2,\ldots,y_n]$.
\end{cor}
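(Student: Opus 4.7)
The plan is to avoid invoking the fiber-type decomposition of Theorem \ref{fiber_type} and instead prove both inclusions in one stroke from the very definition of the Rees presentation ideals, exploiting the simple identity $f_i = \ell_1 f_{1i}$ for $i = 2,\ldots,n$ together with the fact that $R$ is a domain and $\ell_1 \neq 0$.

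First I would note that both $\mathcal I(\A,n-1)\cap T'$ and $\mathcal I(\A',n'-1)$ are bihomogeneous in the bigrading of $T'$ inherited from $T$, so it suffices to compare bihomogeneous elements of each bidegree $(a,d)$. Fix such an $F \in T'$. Via the Rees map $\varphi\colon T \to R[I t]$, $y_i \mapsto f_i t$, restricted to $T'$, one computes
\[
\varphi(F) \;=\; F(f_2 t, \ldots, f_n t) \;=\; t^d F(f_2,\ldots,f_n) \;=\; t^d F(\ell_1 f_{12}, \ldots, \ell_1 f_{1n}) \;=\; t^d \ell_1^{d} F(f_{12},\ldots,f_{1n}),
\]
where the last equality uses that $F$ is homogeneous of degree $d$ in $y_2,\ldots,y_n$.

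Now $F \in \mathcal I(\A,n-1)\cap T'$ is equivalent to $\varphi(F)=0$ in $R[t]$, which in turn — since $R[t]$ is an integral domain and $\ell_1\neq 0$ — is equivalent to $F(f_{12},\ldots,f_{1n}) = 0$ in $R$. But this last condition is precisely the statement that $F$ lies in the kernel of the Rees map $\varphi'\colon T' \to R[I_{n'-1}(\A')t]$, $y_i \mapsto f_{1i} t$, namely $F \in \mathcal I(\A',n'-1)$. Both inclusions follow at once, yielding the corollary. There is no hard step here: the entire content is the factorization $f_i = \ell_1 f_{1i}$ combined with the domain property of $R$.
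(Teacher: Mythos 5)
Your proof is correct, and it takes a genuinely different and substantially more elementary route than the paper's. The paper deduces the corollary from Theorem \ref{fiber_type}: it decomposes $F\in\mathcal I(\A,n-1)\cap T'$ as $(\ell_1y_1-\ell_2y_2)P+Q+G$ with $Q\in\partial(\A|_{\ell_1})T$ and $G\in\mathcal I(\A',n'-1)T$, multiplies by $\ell_1$ to absorb $Q$ into $\mathcal I_1(\A,n-1)$ via Lemma \ref{lemma2}, and then peels off the coefficients of the powers of $y_1$ one at a time, repeatedly using primality of $\mathcal I(\A',n'-1)$. You instead work directly with the presentation ideals as kernels of the evaluation maps, so that everything reduces to the identity $F(f_2,\dots,f_n)=\ell_1^{d}F(f_{12},\dots,f_{1n})$ for $F\in T'$ of $y$-degree $d$, together with $R[t]$ being a domain and $\ell_1\neq 0$. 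The two facts you rely on --- that both ideals are bihomogeneous (so it suffices to test bihomogeneous elements; alternatively, the $t$-grading of $\varphi(F)$ separates the $y$-homogeneous components automatically) and that membership in a presentation ideal is detected by vanishing under evaluation --- are standard and are used freely elsewhere in the paper, e.g.\ in the proofs of Lemma \ref{lemma1} and Theorem \ref{fiber_type}. What your argument buys is logical independence: the corollary is really an elementary consequence of the factorization $f_i=\ell_1 f_{1i}$ and does not need the fiber-type theorem at all. The paper's longer route gains nothing here beyond expository continuity, and your proof could replace it.
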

\begin{proof} Recall the notation $T':=R[y_2,\ldots,y_n]\subset T:=R[y_1,\ldots,y_n]=T'[y_1]$ as in the proof of the previous theorem. Denote $J:=\mathcal I(\A,n-1)\cap T'$. We show that $J\subseteq \mathcal I(\A',n'-1)$, the other inclusion being obvious.

Let $F\in J$. Then $F\in T'$ and $F\in \mathcal I(\A,n-1)$. By Theorem \ref{fiber_type}, we can write $$F=(\ell_1y_1-\ell_2y_2)P+Q+G, \mbox{ where }P\in T,\, Q\in \partial(\A|_{\ell_1})T,\, G\in \mathcal I(\A',n'-1)T.$$

By Lemma~\ref{lemma2},
$$\ell_1Q\in \mathcal I_1(\A,n-1)=\langle \ell_1y_1-\ell_2y_2 , \mathcal I_1(\A',n'-1)\rangle.$$
Therefore,
\begin{equation}\label{key_eq}
\ell_1F=(\ell_1y_1-\ell_2y_2)P'+G',
\end{equation}
for suitable $P'\in T,\, G'\in \mathcal I(\A',n'-1)T$.

We write $P'=y_1^uP_u+\cdots+y_1P_1+P_0, P_i\in T'$, and $G'=y_1^vG_v+\cdots+y_1G_1+G_0, G_j\in T'$.

Since $G'\in \mathcal I(\A',n'-1)\subset T'$,  setting $y_1=0$ in the expression of $G'$ gives that $G_0\in \mathcal I(\A',n'-1)$. Therefore, $G-G_0=y_1(y_1^{v-1}G_v+\cdots+G_1)\in\mathcal I(\A',n'-1),$ and hence $y_1^{v-1}G_v+\cdots+G_1\in \mathcal I(\A',n'-1)$ since $\mathcal I(\A',n'-1)$ is prime. Setting again $y_1=0$ in this expression we obtain that $G_1\in \mathcal I(\A',n'-1)$, and so on, eventually obtaining
$$G_j\in \mathcal I(\A',n'-1), j=0,\ldots,v.$$

Suppose $u\geq v$. Then, by grouping the powers of $y_1$ we have
\begin{eqnarray*}
\ell_1F&=&(-\ell_2y_2P_0+G_0)+(\ell_1P_0-\ell_2y_2P_1+G_1)y_1+\cdots+(\ell_1P_{v-1}-\ell_2y_2P_v+G_v)y_1^v\\
    &+&(\ell_1P_{v}-\ell_2y_2P_{v+1})y_1^{v+1}+\cdots+(\ell_1P_{u-1}-\ell_2y_2P_{u})y_1^{u}+\ell_1P_uy_1^{u+1}.
\end{eqnarray*}

Since $F\in T'$, then $\ell_1F\in T'$. Thus, the ``coefficients'' of $y_1, y_1^2,\ldots,y_1^{u+1}$ must vanish. It follows that
$$P_u=\cdots=P_v=0 \;\; \mbox{\rm and}\;\;\ell_1P_{v-1}=-G_v\in\mathcal I(\A',n'-1).$$ Since $\mathcal I(\A',n'-1)$ is a prime ideal, we have $P_{v-1}\in \mathcal I(\A',n'-1),$ and therefore $$\ell_1P_{v-2}=\ell_2y_2P_{v-1}-G_{v-1}\in\mathcal I(\A',n'-1).$$ Recursively we get that $$P_{v-1},P_{v-2},\ldots,P_1,P_0\in \mathcal I(\A',n'-1).$$

If $u<v$, a similar analysis will give the same conclusion that $P'\in \mathcal I(\A',n'-1)T$.

Therefore, (\ref{key_eq}) gives $\ell_1F\in \mathcal I(\A',n'-1)T,$ and hence $F\in\mathcal I(\A',n'-1)T$  by primality of the extended ideal.
But then $F\in I(\A',n'-1)T\cap T'=I(\A',n'-1)$, as required.
\end{proof}

\medskip

The next two corollaries help compute the Rees ideal from the symmetric ideal via a simple colon of ideals.

\begin{cor}\label{Rees_colon} Let $\ell_i\in\A$ and $y_i$ be the corresponding external variable. Then
\[
\mathcal I(\A,n-1)=\mathcal I_1(\A,n-1):\ell_iy_i.
\]
\end{cor}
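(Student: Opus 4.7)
\smallskip

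The plan is to prove each inclusion separately, using the Rees ideal's primality for one direction and the fiber type property just established (Theorem~\ref{fiber_type}) for the other.

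For the inclusion $\mathcal I_1(\A,n-1):\ell_iy_i\subseteq \mathcal I(\A,n-1)$, I would note that $\mathcal I_1(\A,n-1)\subseteq \mathcal I(\A,n-1)$, so it suffices to verify that $\ell_iy_i\notin\mathcal I(\A,n-1)$, since the latter is a prime ideal. This is immediate: under the presentation $\varphi\colon T\to R[It]$, we have $\varphi(\ell_iy_i)=\ell_i f_i t=(\ell_1\cdots\ell_n)t\neq 0$.

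For the reverse inclusion $\mathcal I(\A,n-1)\subseteq \mathcal I_1(\A,n-1):\ell_iy_i$, Theorem~\ref{fiber_type} gives
\[
\mathcal I(\A,n-1)=\mathcal I_1(\A,n-1)+\partial(\A)\,T,
\]
so it is enough to check that $\ell_iy_i\cdot\partial D\in\mathcal I_1(\A,n-1)$ for every generator $\partial D$ of the Orlik-Terao ideal corresponding to a circuit $D$ with support $C\subseteq[n]$. The idea is to use Lemma~\ref{lemma2} after shifting the index $i$ into the support of the circuit. Concretely, pick any $j\in C$. Then $\partial D\in \partial(\A|_{\ell_j})\subseteq \mathcal I_1(\A,n-1):\ell_j$, so $\ell_j\partial D\in\mathcal I_1(\A,n-1)$, and multiplying by $y_j$ gives $\ell_jy_j\,\partial D\in\mathcal I_1(\A,n-1)$. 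Finally, from the description of the generators in Lemma~\ref{sym_ideal}(b), a telescoping argument yields $\ell_iy_i-\ell_jy_j\in\mathcal I_1(\A,n-1)$, whence
\[
\ell_iy_i\,\partial D=(\ell_iy_i-\ell_jy_j)\,\partial D+\ell_jy_j\,\partial D\in\mathcal I_1(\A,n-1),
\]
as required.

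The argument is essentially packaging of earlier results, so there is no serious obstacle; the only subtle point is remembering that Lemma~\ref{lemma2} applies to any chosen $\ell_j$ present in the circuit, not only to $\ell_i$. This trick of swapping $\ell_iy_i$ for $\ell_jy_j$ modulo $\mathcal I_1$ is what allows the colon by $\ell_iy_i$ to absorb all Orlik-Terao generators at once, including those whose supports are disjoint from $\{i\}$.
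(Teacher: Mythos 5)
Your proof is correct and follows essentially the same route as the paper: one inclusion from primality of the Rees ideal, the other from Theorem~\ref{fiber_type} combined with Lemma~\ref{lemma2}, and the same swap $\ell_iy_i\partial D=(\ell_iy_i-\ell_jy_j)\partial D+\ell_jy_j\partial D$ to absorb circuits not containing $i$. The only cosmetic difference is that the paper treats circuits through $\ell_i$ as a separate (trivial) case before applying the swap, whereas you handle all circuits uniformly.
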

\begin{proof} Without loss of generality, assume $i=1$.

The inclusion $\supseteq$ is immediate, since $\mathcal I_1(\A,n-1)\subset\mathcal I(\A,n-1)$, and the Rees ideal $\mathcal I(\A,n-1)$ is a prime ideal not containing $\ell_1$ nor $y_1$.

Now we show the inclusion $\subseteq$. Let $F\in \mathcal I(\A,n-1)$. Then, from Theorem \ref{fiber_type},
\[
F=G+\sum_{D}P_D\partial D,
\] where the sum is taken over all minimal dependencies $D$, and $G\in\mathcal I_1(\A,n-1)$.

Obviously, $\ell_1y_1G\in\mathcal I_1(\A,n-1)$. Also, if $\partial D\in\partial(\A|_{\ell_1})$, then, from Lemma~\ref{lemma2}, $\ell_1\partial D$, hence $\ell_1y_1\partial D$ belongs to $\mathcal I_1(\A,n-1)$.

Suppose $\partial D\notin \partial(\A|_{\ell_1})$. Since $D$ is a minimal dependency among the hyperplanes of $\A$, there exists $j\in\{1,\ldots,n\}$ such that $\partial D\in\partial(\A|_{\ell_j})$. Thus,
$\ell_1y_1\partial D=(\ell_1y_1-\ell_jy_j)\partial D+\ell_jy_j\partial D$ 
belongs to the ideal $\mathcal I_1(\A,n-1)$ since each summand belongs to $\mathcal I_1(\A,n-1)$ -- the first trivially and the second due to Lemma~\ref{lemma2}.
\end{proof}

Since the rank of $\A$ is $k$, after a reordering of the linear forms $\ell_1,\ldots,\ell_n$ that define $\A$, we can assume that the last $k$ linear forms $\ell_{n-k+1},\ldots,\ell_n$ are linearly independent.
With this proviso, one has:

\begin{cor}\label{saturating_by_product} $\mathcal I(\A,n-1)=\mathcal I_1(\A,n-1):\prod_{i=1}^{n-k}\ell_i.$
\end{cor}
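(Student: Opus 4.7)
The plan is to mimic the strategy used for Corollary \ref{Rees_colon}, but to exploit the linear independence of $\ell_{n-k+1},\ldots,\ell_n$ so as to shrink the saturating element from a product of all the $\ell_iy_i$'s down to the product $\prod_{i=1}^{n-k}\ell_i$. The inclusion $\supseteq$ is immediate from primality: $\mathcal I(\A,n-1)$ is prime and none of the linear forms $\ell_i$ belong to it, so if $F\cdot\prod_{i=1}^{n-k}\ell_i\in\mathcal I_1(\A,n-1)\subset\mathcal I(\A,n-1)$ then $F\in\mathcal I(\A,n-1)$.

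For the reverse inclusion, I would invoke the fiber-type statement (Theorem \ref{fiber_type}) to write an arbitrary $F\in\mathcal I(\A,n-1)$ as
\[
F=G+\sum_{D}P_D\,\partial D,
\]
with $G\in\mathcal I_1(\A,n-1)$ and the sum indexed by the minimal dependencies (circuits) of $\A$. Since trivially $\prod_{i=1}^{n-k}\ell_i\cdot G\in\mathcal I_1(\A,n-1)$, it suffices to show that $\prod_{i=1}^{n-k}\ell_i\cdot\partial D\in\mathcal I_1(\A,n-1)$ for each such circuit $D$.

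The matroid-theoretic observation that unlocks this is the following: because $\{\ell_{n-k+1},\ldots,\ell_n\}$ is linearly independent, it (together with all its subsets) is an independent set of the matroid of $\A$, so no circuit of $\A$ can be contained entirely in $\{\ell_{n-k+1},\ldots,\ell_n\}$. Thus every minimal dependency $D$ must involve some $\ell_{i_0}$ with $i_0\in\{1,\ldots,n-k\}$, i.e. $\partial D\in\partial(\A\vert_{\ell_{i_0}})$. By Lemma \ref{lemma2}, $\ell_{i_0}\,\partial D\in\mathcal I_1(\A,n-1)$, and hence the full product $\prod_{i=1}^{n-k}\ell_i\cdot\partial D$ also lies in $\mathcal I_1(\A,n-1)$, as needed.

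The only real subtlety is the matroid remark above; once that is in hand, the rest of the proof is essentially a bookkeeping combination of Theorem \ref{fiber_type} with Lemma \ref{lemma2}, with no further computation required.
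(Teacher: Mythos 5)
Your proposal is correct and follows essentially the same route as the paper: the containment $\supseteq$ by primality of $\mathcal I(\A,n-1)$, and the containment $\subseteq$ by combining Theorem \ref{fiber_type} with Lemma \ref{lemma2} and the observation that no circuit can lie entirely inside the independent set $\{\ell_{n-k+1},\ldots,\ell_n\}$, so that $\partial(\A)=\partial(\A|_{\ell_1})+\cdots+\partial(\A|_{\ell_{n-k}})$. No gaps.
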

\begin{proof} Since $\ell_{n-k+1},\ldots,\ell_n$ are $k$ linearly independent linear forms, any minimal dependency that involves at least one of them, must involve also a linear form $\ell_j$, where $j\in\{1,\ldots,n-k\}$. So
\[
\partial(\A)=\partial(\A|_{\ell_1})+\cdots+\partial(\A|_{\ell_{n-k}}).
\]

We obviously have $ \mathcal I_1(\A,n-1)\subseteq \mathcal I_1(\A,n-1):\prod_{i=1}^{n-k}\ell_i$, and from Lemma~\ref{lemma2},
\[
\partial(\A|_{\ell_j})\subset \mathcal I_1(\A,n-1):\prod_{i=1}^{n-k}\ell_i, \mbox{ for all }j=1,\ldots,n-k.
\]
Then, from Theorem \ref{fiber_type}, one has
\[
\mathcal I(\A,n-1)\subseteq\mathcal I_1(\A,n-1):\prod_{i=1}^{n-k}\ell_i.
\]

The reverse inclusion comes from the fact that $\mathcal I_1(\A,n-1)\subseteq \mathcal I(\A,n-1)$, and from $\mathcal I(\A,n-1)$ being a prime ideal with $\ell_i\notin \mathcal I(\A,n-1)$.
\end{proof}

In the next statement we denote the extended ideal  $(\mathcal I_1(\A',n'-1))T$ by $\langle \mathcal I_1(\A',n'-1)\rangle$.

\begin{lem}\label{lem:x1y1-x2y2}
	Let $\A'=\A\setminus \{\ell_1\}$ and $n'=|\A'|=n-1$. We have
	\[
	\langle \cI_1(\A',n'-1)\rangle :(\ell_1y_1-\ell_2y_2)=\langle \cI_1(\A',n'-1)\rangle:\ell_1.
	\]
In particular, when $\ell_1$ is a coloop, the biform $\ell_1y_1-\ell_2y_2$ is a nonzerodivisor on $\langle \cI_1(\A',n'-1)\rangle$.
\end{lem}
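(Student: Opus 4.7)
The plan is to work in the polynomial extension $T = T'[y_1]$, where $T' := R[y_2,\ldots,y_n]$ and $J' := \cI_1(\A',n'-1) \subset T'$, so that $J := \langle \cI_1(\A',n'-1)\rangle = J'T$. Every $F \in T$ admits a unique expansion $F = \sum_{i=0}^d y_1^i F_i$ with $F_i \in T'$, and $F \in J$ if and only if each $F_i \in J'$. The critical input is Corollary~\ref{Rees_colon} applied to the subarrangement $\A'$, which yields $J':\ell_2 y_2 = \cI(\A',n'-1)$; the latter is the presentation ideal of the Rees algebra of $I_{n'-1}(\A')$, hence a prime ideal of $T'$ meeting $R$ trivially, so in particular $\ell_1 \notin \cI(\A',n'-1)$.

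For the inclusion $J:(\ell_1 y_1 - \ell_2 y_2) \subseteq J:\ell_1$, I would start from $F(\ell_1 y_1 - \ell_2 y_2) \in J$ and match $y_1$-coefficients to obtain $\ell_1 F_d \in J'$ together with $\ell_1 F_{j-1} - \ell_2 y_2 F_j \in J'$ for $1 \le j \le d$. Since $\ell_1 F_d \in J' \subseteq \cI(\A',n'-1)$, primality combined with $\ell_1 \notin \cI(\A',n'-1)$ forces $F_d \in \cI(\A',n'-1) = J':\ell_2 y_2$, whence $\ell_2 y_2 F_d \in J'$ and hence $\ell_1 F_{d-1} \in J'$. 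A downward induction yields $\ell_1 F_i \in J'$ for every $i$, i.e., $\ell_1 F \in J$. The reverse inclusion is analogous: $\ell_1 F \in J$ gives $\ell_1 F_i \in J'$ for every $i$, the same primality step produces $\ell_2 y_2 F_i \in J'$, hence $\ell_2 y_2 F \in J$, and combining with $\ell_1 y_1 F \in J$ gives $(\ell_1 y_1 - \ell_2 y_2) F \in J$.

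For the ``in particular'' assertion, the equality just proved reduces the claim that $\ell_1 y_1 - \ell_2 y_2$ is a nonzerodivisor modulo $J$ to the claim that $\ell_1$ is; and since $T/J \simeq (T'/J')[y_1]$, this further reduces to $\ell_1$ being a nonzerodivisor on $T'/J'$. When $\ell_1$ is a coloop, $\mathrm{rank}(\A') = k-1$, so $\ell_1 \notin \langle \ell_2,\ldots,\ell_n\rangle$. A linear automorphism of $R$ (harmless for these ideal-theoretic invariants) allows us to assume $\ell_1 = x_1$ and $\ell_2,\ldots,\ell_n \in R' := \K[x_2,\ldots,x_k]$; then every generator $\ell_i y_i - \ell_{i+1} y_{i+1}$ of $J'$ lies in $R'[y_2,\ldots,y_n]$, and $T'/J' \simeq (R'[y_2,\ldots,y_n]/J')[x_1]$ is a polynomial ring in $x_1 = \ell_1$, on which $\ell_1$ is manifestly a nonzerodivisor. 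The only genuine challenge in the whole argument is recognizing that the equality of colon ideals collapses to the primality of the Rees ideal via Corollary~\ref{Rees_colon}; once that is spotted, the rest is routine bookkeeping in the polynomial extension $T = T'[y_1]$.
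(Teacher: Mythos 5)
Your proof is correct, and it rests on the same two pillars as the paper's own argument: the primality of the deletion's Rees ideal $\cI(\A',n'-1)$ (extended to $T=T'[y_1]$) and Corollary~\ref{Rees_colon} applied to $\A'$ with the element $\ell_2y_2$, which converts membership in $\cI(\A',n'-1)$ into the statement that multiplication by $\ell_2y_2$ lands in $\langle\cI_1(\A',n'-1)\rangle$. The mechanical route differs, though. The paper applies primality directly to $f(\ell_1y_1-\ell_2y_2)$ (after noting the biform is not in the prime), then invokes Theorem~\ref{fiber_type} to split $f=g+h$ with $h\in\langle\partial(\A')\rangle$ before using Corollary~\ref{Rees_colon}, and it first proves the equality with $:\ell_1y_1$ and only then passes to $:\ell_1$. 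You instead expand $F$ in powers of $y_1$ and run a downward induction on the coefficients, using only that $\ell_1\notin\cI(\A',n'-1)$ (which holds since the Rees ideal meets $R$ trivially); this avoids any direct appeal to the fiber-type decomposition (it still enters implicitly through Corollary~\ref{Rees_colon}) and lands on the colon by $\ell_1$ in one pass. A genuine plus of your write-up is that you actually prove the coloop assertion --- after a linear change of coordinates $T'/\cI_1(\A',n'-1)$ becomes a polynomial ring in $x_1=\ell_1$, so $\ell_1$ is a nonzerodivisor --- whereas the paper states this consequence without argument. Both proofs are sound; yours is slightly more self-contained in its bookkeeping.
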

\begin{proof}
	
	For convenience, let us change coordinates to have $\ell_1=x_1$ and $\ell_2=x_2$. Let $f\in \langle \cI_1(\A',n'-1)\rangle:(x_1y_1-x_2y_2)$. Then $f(x_1y_1-x_2y_2)\in \langle \cI_1(\A',n'-1)\rangle\subset \langle \cI(\A',n'-1)\rangle$. Since $\langle \cI(\A',n'-1)\rangle$ is a prime ideal not containing $x_1y_1-x_2y_2$, we obtain $f\in \langle \mathcal I(\A',n'-1)\rangle$, and by Theorem \ref{fiber_type}, we have
	\[
	f=g+h,\quad g\in \langle \cI_1(\A',n'-1)\rangle,\quad h\in \langle\partial(\A')\rangle.
	\]
	By multiplying this by $x_1y_1-x_2y_2$, we get that
	\[
	(x_1y_1-x_2y_2)h\in \langle \cI_1(\A',n'-1)\rangle .
	\]
	By Corollary \ref{Rees_colon}, since $h\in \langle \partial(\A')\rangle \subset \langle \cI(\A',n'-1)\rangle $, and $x_2\in \A'$, we have $x_2y_2h\in \langle \cI_1(\A',n'-1)\rangle $. So $h\in \langle \cI_1(\A',n'-1)\rangle :x_1y_1$, and together with $f=g+h$ with $g\in \langle \cI_1(\A',n'-1)\rangle \subset \mathcal \langle \cI_1(\A',n'-1)\rangle :x_1y_1$, gives
	\[
	f\in \langle \cI_1(\A',n'-1)\rangle:x_1y_1.
	\]
	
	Conversely, let $\Delta\in \langle \cI_1(\A',n'-1)\rangle:x_1y_1$. Then $x_1y_1\Delta\in \langle \cI_1(\A',n'-1)\rangle \subseteq \langle \cI(\A',n'-1)\rangle$. The ideal $\langle \cI(\A',n'-1)\rangle$ is a prime ideal, and $x_1y_1\notin \langle \cI(\A',n'-1)\rangle$, so $\Delta\in \langle \cI(\A',n'-1)\rangle$. So, by Corollary \ref{Rees_colon}, $x_2y_2\Delta\in \langle \cI_1(\A',n'-1)\rangle$. Therefore
	\[
	(x_1y_1-x_2y_2)\Delta=x_1y_1\Delta-x_2y_2\Delta\in \langle \cI_1(\A',n'-1)\rangle.
	\]
	Thus far, we have shown that $	\langle \cI_1(\A',n'-1)\rangle :(\ell_1y_1-\ell_2y_2)=\langle \cI_1(\A',n'-1)\rangle:x_1y_1$.
	Clearly, the right hand side is the same as $\langle \cI_1(\A',n'-1)\rangle:x_1$ since $y_1$ is a nonzero divisor on $\langle \cI_1(\A',n'-1)\rangle$.
\end{proof}

\subsection{The Cohen-Macaulay property}

In this part the goal is to prove that the Rees algebra is Cohen-Macaulay.
Since we are in a graded setting, this is equivalent to showing that its depth with respect to the maximal graded ideal $\langle \fm,y_1,\ldots,y_n\rangle $ is (at least) $k+1=\dim R[It]$.

This will be accomplished by looking at a suitable short exact sequence, where two of the modules will be examined next.
We state the results in terms of depth since this notion is inherent to the Cohen-Macaulay property, yet the proofs will take the approach via projective (i.e., homological) dimension.
By the Auslander-Buchsbaum equality, we are home anyway.

Throughout, ${\rm pdim}_T$ will denote projective dimension over the polynomial ring $T$. Since we are in a graded situation, this is the same as the projective dimension over the local ring $T_{(\fm,y_1,\ldots,y_n)}$, so we may harmlessly proceed.

The first module is obtained by cutting the binomial generators of $\mathcal I_1(\A,n-1)$ into its individual terms.
The result  may have interest on itself.

\begin{lem}\label{conjecture} Let $\ell_1,\ldots,\ell_n\in \K[x_1,\ldots,x_k]$ be linear forms, allowing some of them to be mutually proportional. Let $T:=\K[x_1,\ldots,x_k;y_1,\ldots,y_n]$. Then
	\[
	{\rm depth}\left(\frac{T}{\langle \ell_1y_1,\ell_2y_2,\ldots,\ell_ny_n\rangle}\right)\geq k.
	\]
\end{lem}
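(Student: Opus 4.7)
I would argue by induction on $n$, with $k$ free. The base case $n=1$ is immediate: if $\ell_1\neq 0$, then $\ell_1y_1$ is a nonzerodivisor on the domain $T$, so ${\rm pd}_T(T/\langle\ell_1y_1\rangle)=1$ and Auslander--Buchsbaum gives depth $k$; if $\ell_1=0$ then $T/J=T$ has depth $k+1\geq k$.

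For the inductive step, write $J_{n-1}:=\langle\ell_1y_1,\ldots,\ell_{n-1}y_{n-1}\rangle$. When $\ell_n=0$ we have $J=J_{n-1}$, so $T/J\cong(\K[x_1,\ldots,x_k;y_1,\ldots,y_{n-1}]/J_{n-1})\otimes_\K\K[y_n]$, and the induction hypothesis combined with the additivity of depth under adjoining a polynomial variable yields depth $\geq k+1$. When $\ell_n\neq 0$, the key tool is the short exact sequence
\[
0\longrightarrow T/(J:y_n)\xrightarrow{\,\cdot\, y_n\,}T/J\longrightarrow T/(J,y_n)\longrightarrow 0.
\]
The module $T/(J,y_n)$ is annihilated by $y_n$ and equals $\K[x_1,\ldots,x_k;y_1,\ldots,y_{n-1}]/J_{n-1}$ as a module over $T/(y_n)$; the induction hypothesis (with $n-1$ forms in $k$ variables) gives its depth $\geq k$, and this depth is preserved on viewing it as a $T$-module since it is annihilated by an ideal. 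For the first module I claim $J:y_n=\langle J_{n-1},\ell_n\rangle$; granting this and applying a linear change in the $x$'s making $\ell_n$ a coordinate, reducing modulo $\ell_n$ produces $\K[x_1,\ldots,x_{k-1};y_1,\ldots,y_n]/\langle\bar\ell_1y_1,\ldots,\bar\ell_{n-1}y_{n-1}\rangle$. The induction hypothesis for $n-1$ forms in $k-1$ variables gives depth $\geq k-1$ over $\K[x_1,\ldots,x_{k-1};y_1,\ldots,y_{n-1}]$, and adjoining the free variable $y_n$ raises this to $\geq k$. The depth lemma applied to the exact sequence then gives ${\rm depth}(T/J)\geq\min\{k,k\}=k$.

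The step that requires the most care is the colon computation $J:y_n=\langle J_{n-1},\ell_n\rangle$, which is the engine of the induction. The inclusion $\supseteq$ follows at once from $y_n\cdot\ell_n=\ell_ny_n\in J$. For $\subseteq$, I would expand $f\in T$ in powers of $y_n$ as $f=\sum_{a\geq 0}f_ay_n^a$ with $f_a\in T':=\K[x_1,\ldots,x_k;y_1,\ldots,y_{n-1}]$, parameterize a general element of $J$ as $\sum_{b\geq 0}y_n^b(h_b+\ell_ng_{b-1})$ with $h_b\in J_{n-1}\subset T'$ and $g_{b-1}\in T'$ (setting $g_{-1}=0$) by separating the contribution of $\ell_ny_n$ from those of the other generators, and then match coefficients of $y_n^b$ in $y_nf=\sum_a f_ay_n^{a+1}$; this forces $f_{b-1}\in J_{n-1}+(\ell_n)$ for every $b\geq 1$, which is precisely the claim. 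The remaining background facts --- invariance of depth under quotienting by the annihilator, and additivity under adjoining a polynomial variable --- are standard consequences of local cohomology and Auslander--Buchsbaum.
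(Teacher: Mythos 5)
Your proof is correct. Both you and the paper run an induction on $n$ built around a colon short exact sequence, but the pivot element differs in a way that changes the key computation. The paper normalizes $\ell_1=x_1$, groups together all $s$ forms proportional to $x_1$, and uses $0\to T/(J:x_1)\to T/J\to T/\langle x_1,J\rangle\to 0$; this forces them to prove the colon identity $J:x_1=\langle y_1,\ldots,y_s,\ell_{s+1}y_{s+1},\ldots,\ell_ny_n\rangle$ (which needs a short regular-sequence argument) and to track projective dimension, concluding via Auslander--Buchsbaum. You instead colon by the external variable $y_n$, where the identity $J:y_n=\langle J_{n-1},\ell_n\rangle$ falls out of matching coefficients of powers of $y_n$, and you work with depth directly via the depth lemma. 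Your route buys a cleaner colon computation and avoids the need to group proportional forms, at the cost of having to let the inductive statement accommodate zero linear forms (since reducing modulo $\ell_n$ can kill some $\bar\ell_i$) --- which you do handle in the base case and the $\ell_n=0$ branch, so there is no gap. The paper's grouping trick achieves the same end by ensuring the surviving $\bar\ell_j$ are nonzero. Both arguments yield the same bound ${\rm pdim}_T(T/J)\le n$, equivalently ${\rm depth}\ge k$.
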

\begin{proof} If $k=1$, the claim is clearly satisfied, since $\langle x_1y_1,\ldots,x_1y_n\rangle=x_1\langle y_1,\ldots,y_n\rangle$, and $\{y_1,\ldots,y_n\}$ is a $T$-regular sequence. Assume $k\geq 2$.

We will use induction on $n\geq 1$ to show that the projective dimension is at most $n+k-k=n$.

If $n=1$, the ideal $\langle \ell_1 y_1\rangle$ is a principal ideal, hence the claim is true.	
Suppose $n>1$.
We may apply a $\K$-linear automorphism on the ground variables, which will not disturb the the projective dimension.
Thus, say,  $\ell_1=x_1$ and this form is repeated $s$ times.
Since nonzero coefficients from $\K$ tagged to $x_1$ will not change the ideal in question, we assume that $\ell_i=x_1$, for  $1\leq i\leq s$, and $\gcd(x_1,\ell_j)=1$, for $s+1\leq j\leq n$.
Write $\ell_j=c_jx_1+\bar{\ell}_j$, for $s+1\leq j\leq n$, with $c_j\in\K$, and $0\neq \bar{\ell}_j\in\K[x_2,\ldots,x_k]$.
		
Denoting $J:=\langle x_1y_1,\ldots, x_1y_s,\ell_{s+1}y_{s+1},\ldots,\ell_ny_n\rangle$,
we claim that
\begin{equation}\label{colon_calculation}
J:x_1=\langle y_1,\ldots,y_s, \ell_{s+1}y_{s+1},\ldots,\ell_ny_n\rangle.
\end{equation}

This is certainly the expression of a more general result, but we give a direct proof here.	

One inclusion is obvious. For the reverse inclusion, let $F\in \langle x_1y_1,\ldots, x_1y_s,\ell_{s+1}y_{s+1},\ldots,\ell_ny_n\rangle:x_1$. Then, say,
	\[
	x_1F=x_1\,\sum_{i=1}^s  P_i y_i + \sum_{j=s+1}^n P_j \ell_j y_j,
	\]
for certain $P_i, P_j\in T$.
	Rearranging we have
	\begin{equation}\label{colon_eq}
x_1(F-\sum_{i=1}^s  P_i y_i -  \sum_{j=s+1}^n c_j P_j y_j)=\sum_{j=s+1}^n P_j \bar{\ell_j} y_j.
	\end{equation}
	
	Since $x_1$ is a nonzero divisor in $T/\langle \bar{\ell}_{s+1}y_{s+1},\ldots,\bar{\ell}_ny_n\rangle$, the second factor of the left hand side in (\ref{colon_eq}) must be of the form
	$$\sum_{j=s+1}^n Q_j \bar{\ell_j} y_j, \; Q_j\in T.$$
	Substituting in (\ref{colon_eq}) we find $P_{j}=x_1Q_{j}, s+1\leq j\leq n$, and hence $F=\sum_{i=1}^s P_iy_i +\sum_{j=s+1}^n Q_j \ell_j y_j$, as claimed.
	
	Computing projective dimensions with respect to $T$ and $T'=\K[x_1,\ldots,y_{s+1},\ldots,y_n]$ and applying the inductive hypothesis, one has
	\[
	{\rm pdim}_T\left(\frac{T}{J:x_1}\right)=s+{\rm pdim}_{T'}\left(\frac{T'}{\langle \ell_{s+1}y_{s+1},\ldots,\ell_ny_n\rangle}\right)\leq s+(n-s)=n.
	\]

	At the other end, we have $\langle x_1,J\rangle=\langle x_1,\bar{\ell}_{s+1}y_{s+1},\ldots,\bar{\ell}_ny_n\rangle.$
	Applying the inductive hypothesis this time around gives
	
	\[
	{\rm pdim}_T\left(\frac{T}{\langle x_1,J\rangle}\right)\leq 1+(n-s)\leq n.
	\]
	
	From the short exact sequence of $T$-modules
	\[
	0\longrightarrow T/(J:x_1)\stackrel{\cdot x_1}\longrightarrow T/J\longrightarrow T/\langle x_1,J\rangle\longrightarrow 0,
	\]
	knowingly the projective dimension of the middle term does not exceed the maximum of the projective dimensions of the two extreme terms.
	Therefore, ${\rm pdim}_T(T/J)\leq n,$ as was to be shown..
\end{proof}

\medskip

The difficult result of this section is the following exact invariant of the symmetric algebra $\mathcal S(I)\simeq T/\langle \mathcal I_1(\A,n-1)\rangle$:

\begin{prop} \label{proj_symmetric} Let $I=I_{n-1}(\A)$ as before.
Then ${\rm depth}(\mathcal S(I))=k+1$.
\end{prop}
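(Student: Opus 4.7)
The plan is to establish the lower bound $\operatorname{depth}_T(\mathcal{S}(I)) \geq k+1$ to complement the upper bound already recorded in Proposition \ref{structure_of_symmetric}(i). By the Auslander--Buchsbaum formula in the polynomial ring $T$ of dimension $n+k$, this is equivalent to showing $\operatorname{pdim}_T \mathcal{S}(I) \leq n-1$. I would proceed by induction on $n \geq k$, paralleling the inductive scheme used for Theorem \ref{fiber_type}. The base case $n = k$ is immediate from Lemma \ref{sym_ideal}: after a linear change of coordinates so that $\ell_i = x_i$, the $n-1 = k-1$ generators $\Delta_i = \ell_i y_i - \ell_{i+1} y_{i+1}$ of $\mathcal{I}_1(\mathcal{A}, n-1)$ form an obvious regular $T$-sequence, hence $\mathcal{S}(I)$ is Cohen--Macaulay of codimension $k-1$ and depth $(n+k) - (k-1) = k+1$.

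For the inductive step, I would fix $\mathcal{A}' := \mathcal{A} \setminus \{\ell_1\}$ and introduce the auxiliary $T$-module $M := T/\langle \mathcal{I}_1(\mathcal{A}', n'-1)\rangle$ together with the bilinear relation $x := \ell_1 y_1 - \ell_2 y_2$. Because $\mathcal{I}_1(\mathcal{A}, n-1) = \langle x\rangle + \langle \mathcal{I}_1(\mathcal{A}', n'-1)\rangle$, the symmetric algebra fits into the four-term exact sequence of $T$-modules
\[
0 \to K \to M \xrightarrow{\cdot x} M \to \mathcal{S}(I) \to 0
\]
(with appropriate shifts), where $K = (0 :_M x)$. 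By Lemma \ref{lem:x1y1-x2y2} this $K$ is precisely the $\ell_1$-torsion submodule of $M$, so it vanishes whenever $\ell_1$ is a coloop. In the coloop case, $\mathcal{A}'$ lives in the smaller polynomial subring $T'' := \K[x_2, \ldots, x_k, y_2, \ldots, y_n]$ of $T$, and the inductive hypothesis (rank $k-1$, size $n-1$) gives $\operatorname{depth}_{T''} T''/\mathcal{I}_1(\mathcal{A}', n'-1) = k$. Extending scalars to $T$ adjoins the two polynomial variables $x_1$ and $y_1$ and bumps the depth to $k+2$; the regularity of $x$ then drops it by one to yield $\operatorname{depth}_T \mathcal{S}(I) = k+1$ as required.

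In the non-coloop case, $\mathcal{A}'$ has rank $k$ and size $n-1$, and the inductive hypothesis produces $\operatorname{depth}_T M = k+2$ (after extending by $y_1$). The module $K$ is now genuinely nonzero, so I would splice the four-term sequence into $0 \to K \to M \to xM \to 0$ and $0 \to xM \to M \to \mathcal{S}(I) \to 0$ and apply the depth lemma to both: the desired inequality $\operatorname{depth}_T \mathcal{S}(I) \geq k+1$ will follow once sufficient depth for $K$ is established. To get that bound I would study the companion sequence $0 \to K \to M \xrightarrow{\cdot \ell_1} M \to M/\ell_1 M \to 0$: after putting $\ell_1 = x_1$, the module $M/\ell_1 M$ is precisely the symmetric-algebra-style presentation attached to the stretched multiarrangement $\bar{\mathcal{A}}'$ supported on the rank-$(k-1)$ contraction $\mathcal{A}'' := \mathcal{A}' \cap H_1$. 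Lemma \ref{lem:multiTOsimple} then rewrites that presentation in terms of $\mathcal{I}_1(\mathcal{A}'', n''-1)$ together with the linear dependencies $\mathcal{D}_{\bar{\mathcal{A}}'}$, permitting both the inductive hypothesis (applied to the simple $\mathcal{A}''$) and Lemma \ref{conjecture} (to absorb the linear piece coming from $\mathcal{D}_{\bar{\mathcal{A}}'}$) to kick in.

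The main obstacle I anticipate is precisely this non-coloop step: carefully matching shifts, bookkeeping the contribution of $\mathcal{D}_{\bar{\mathcal{A}}'}$ versus $\mathcal{I}_1(\mathcal{A}'', n''-1)$ in the presentation of $M/\ell_1 M$, and extracting a depth estimate for $K$ strong enough to push the final depth-lemma computation through. Should the direct inductive chase prove too delicate, a parallel route worth considering is to invoke the approximation complex of Herzog--Simis--Vasconcelos: since $I = I_{n-1}(\mathcal{A})$ is perfect of codimension $2$, the sliding-depth condition is automatic, and the associated $\mathcal{M}$-complex (of length exactly $n-1$) then resolves $\mathcal{S}(I)$ over $T$, yielding $\operatorname{pdim}_T \mathcal{S}(I) \leq n-1$ directly.
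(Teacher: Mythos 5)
Your skeleton coincides with the paper's: reduce to $\mathrm{pdim}_T\,\mathcal S(I)\le n-1$, induct via deletion using the exact sequence built from $\mathcal I_1(\A,n-1)=\langle \ell_1y_1-\ell_2y_2,\ \mathcal I_1(\A',n'-1)\rangle$, invoke Lemma \ref{lem:x1y1-x2y2} to identify the kernel of multiplication by $\ell_1y_1-\ell_2y_2$ with the $\ell_1$-torsion, split into coloop and non-coloop cases, and in the latter pass to $M/\ell_1M=T/\langle x_1,\mathcal I_1(\A',n'-1)\rangle$, governed by the stretched arrangement obtained by restricting to $x_1=0$. The coloop case is complete as you describe it. But the decisive step --- a projective dimension bound for $M/\ell_1M$, equivalently a depth bound for $K$ --- is exactly the part you leave open, and the tool you propose for it does not apply: Lemma \ref{lem:multiTOsimple} describes the \emph{Rees} ideal of a stretched arrangement, where saturation converts repeated forms into the linear relations $y_i-b_{i,j}y_j$. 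At the symmetric-algebra level, which is what $M/\ell_1M$ presents, a pair of proportional forms $\bar\ell_i=d\,\bar\ell_j$ contributes the relation $x_2(y_i-dy_j)$, not $y_i-dy_j$; the linear ideal $\mathcal D_{\bar{\A}}$ is not contained in $\langle x_1,\mathcal I_1(\A',n'-1)\rangle$, so there is no linear piece for Lemma \ref{conjecture} to absorb. Extracting those linear relations is precisely where the paper does real work: it sets $J=\langle x_2(y_2-d_3y_3),\ldots,x_2y_2-\bar\ell_ny_n\rangle$, computes $J:x_2$ (only there do the forms $y_2-d_iy_i$ appear) and $\langle x_2,J\rangle$ separately, bounds each by a further recursion together with the colon computation from the proof of Lemma \ref{conjecture}, and assembles $\mathrm{pdim}_{T'}(T'/J)\le n'-1$ from the resulting short exact sequence. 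Without this sub-argument your induction does not close.

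Two smaller points. First, your base case covers only $n=k$; since contraction drops the rank, the induction also needs the rank-two case as an anchor (the paper uses $I=\langle x,y\rangle^{n-1}$ and exhibits the explicit regular sequence $y_1,\ x+y_n,\ y+y_{n-1}$ modulo $\mathcal I_1(\A,n-1)$). Second, the approximation-complex fallback is not a free pass: sliding depth does hold (perfect codimension-two ideals are licci, hence strongly Cohen--Macaulay), but the terms $Z_i\otimes_R T$ of the $\mathcal Z$-complex (note it is $\mathcal Z$, not $\mathcal M$, whose $H_0$ is $\mathcal S(I)$) are not free $T$-modules, so acyclicity does not ``directly'' yield $\mathrm{pdim}_T\,\mathcal S(I)\le n-1$; one must still run a depth chase along the complex. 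That route may well succeed, but it requires its own argument rather than a citation.
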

\begin{proof} By Proposition~\ref{structure_of_symmetric} (i), it suffices to prove the lower bound ${\rm depth}(\mathcal S(I))\geq k+1$.

As in the proof of Theorem~\ref{fiber_type}, we argue by induction on all pairs $n,k$, with $n\geq k\geq 2$, where $n$ and $k$ are, respectively, the size and the rank of $\A$.
	
If $k=2$ and $n>2$, let $R=\K[x,y]$. As seen in that proof, one has $I=\langle x,y\rangle ^{n-1}$, and hence
$$\mathcal I_1(\A,n-1)=\langle xy_1-yy_2, xy_2-yy_3,\ldots, xy_{n-1}-yy_n\rangle.$$
A direct calculation shows that $\{y_1,x+y_n,y+y_{n-1}\}$ is a regular sequence modulo $\mathcal I_1(\A,n-1)$.
	
If $n=k$, the ideal $\mathcal I_1(\A,n-1)$ is a complete intersection by Lemma~\ref{sym_ideal}.
	
Thus, for the main inductive step suppose $n>k>3$.

We will	equivalently show that ${\rm pdim}_T(\mathcal S(I))\leq n-1$.
First apply a change of ground variables so as to have $\ell_1=x_1$ and $\ell_2=x_2$.

Let $\A':=\A\setminus\{x_1\}$ denote deletion.
Since $\mathcal I_1(\A,n-1)=\langle \mathcal I_1(\A',n'-1),x_1y_1-x_2y_2\rangle$, we have the following short exact sequence of $T$-modules
\begin{equation}\label{first_exact_seq}
0\rightarrow\frac{T}{\langle \cI_1(\A',n'-1)\rangle :(x_1y_1-x_2y_2)}\xrightarrow{\cdot (x_1y_1-x_2y_2)} \frac{T}{\langle\cI_1(\A',n'-1)\rangle}\longrightarrow\frac{T}{\mathcal I_1(\A,n-1)}\rightarrow 0.
\end{equation}

We consider separately the cases where $\ell_1$ is a coloop or a non-coloop.

\smallskip

	{\sc $x_1$ is a coloop.}
	
Here the rank of $\A'$ is $k-1$ and $x_1$ is altogether absent in the linear forms of the deletion.
Thus, the natural ambient ring of $\cI_1(\A',n'-1)$ is $T':=\K[x_2,\dots, x_k;y_2,\dots, y_n]$.
	
In this case, by Lemma \ref{lem:x1y1-x2y2}, the left most nonzero term of (\ref{first_exact_seq}) becomes
$$T/\langle\cI_1(\A',n'-1)\rangle=\frac{T'}{\cI_1(\A',n'-1)}[x_1,y_1],$$
hence
$${\rm pdim}_T (T/\langle\cI_1(\A',n'-1)\rangle)={\rm pdim}_{T'}(T'/\cI_1(\A',n'-1))\leq n'-1,$$
by the inductive hypothesis applied to $\mathcal S(I_{n'-1}(\A'))\simeq T'/\cI_1(\A',n'-1)$.

 Then, from (\ref{first_exact_seq}) we have
 \begin{eqnarray*}
{\rm pdim}_T(T/\cI_1(\A,n-1))&\leq &\max\{{\rm pdim}_T(T/\langle\cI_1(\A',n'-1)\rangle)+1,{\rm pdim}_T (T/\langle\cI_1(\A',n'-1)\rangle)\} \\
&\leq & (n'-1)+1=n'=n-1.
 \end{eqnarray*}

	\medskip
	
	{\sc $x_1$ is a non-coloop.}

This case will occupy us for the rest of the proof.	
Here $T':=\K[x_1,\dots, x_k;y_2,\dots,y_n]$ is the natural ambient ring of the deletion symmetric ideal.
	
By Lemma \ref{lem:x1y1-x2y2}, the left most nonzero term of (\ref{first_exact_seq}) is $T/(\langle\cI_1(\A',n'-1)\rangle:x_1)$.
Thus, multiplication by $x_1$ gives a similar exact sequence to (\ref{first_exact_seq}):	
\begin{equation}\label{second_exact_seq}
	0\longrightarrow\frac{T}{\langle \cI_1(\A',n'-1)\rangle:x_1}\longrightarrow \frac{T}{\langle \cI_1(\A',n'-1)\rangle}\longrightarrow\frac{T}{\langle x_1,\mathcal I_1(\A',n'-1)\rangle}\longrightarrow 0.
\end{equation}
Suppose for a minute that one has
\begin{equation}\label{auxiliary_pdim}
{\rm pdim}_T\left(\frac{T}{\langle x_1,\mathcal I_1(\A',n'-1)\rangle}\right)\leq n'.
\end{equation}

Then (\ref{second_exact_seq}) implies
 $${\rm pdim}_T\left(\frac{T}{\mathcal I_1(\A',n'-1):x_1}\right)\leq\max\{n'-1,n'-1\}=n'-1.$$
Back to (\ref{first_exact_seq}) would  finally  give
$${\rm pdim}_T\left(\frac{T}{\mathcal I_1(\A,n-1)}\right)\leq\max\{(n'-1)+1,n'-1\}=n'=n-1,$$
proving the required statement of the theorem.

Thus, it suffices to prove (\ref*{auxiliary_pdim}).

For this, one sets
$\langle x_1,\mathcal I_1(\A',n'-1)\rangle=\langle x_1, x_2y_2-\bar{\ell}_3y_3,\ldots,\bar{\ell}_{n-1}y_{n-1}-\bar{\ell}_ny_n\rangle,$ where we have written $\ell_j=c_jx_1+\bar{\ell}_j$, with $c_j\in\K,  \bar{\ell}_j\in \K[x_2,\ldots,x_k]$, for $3\leq j\leq n$.
Then,
	\begin{equation}\label{pdim_oneplus}
{\rm pdim}_T\left(\frac{T}{\langle x_1,\mathcal I_1(\A',n'-1)\rangle}\right)=1+{\rm pdim}_{T'}\left(\frac{T'}{\langle x_2y_2-\bar{\ell}_3y_3,\ldots,\bar{\ell}_{n-1}y_{n-1}-\bar{\ell}_ny_n\rangle}\right).
	\end{equation}

	Let $\bar{\A}=\{x_2,\bar{\ell}_3,\ldots,\bar{\ell}_n\}$ denote the corresponding stretched arrangement and set
	$$J:=\langle x_2y_2-\bar{\ell}_3y_3,\ldots,\bar{\ell}_{n-1}y_{n-1}-\bar{\ell}_ny_n\rangle\subset T':=\K[x_2,\ldots,x_k;y_2,\ldots,y_n].$$
	{\sc Claim.} ${\rm pdim}_{T'}(T'/J)\leq n'-1$.
	
	If the size of $\bar{\A}$ is $=n-1=n'$ (i.e., no two linear forms of $\bar{\A}$ are proportional), then $J=\mathcal I_1(\bar{\A}, n'-1)$, and by the inductive hypotheses ${\rm pdim}_{T'}(T'/\mathcal I_1(\bar{\A},n'-1))\leq n'-1$.
	
	Otherwise, suppose $s-1\geq 2$ of the linear forms of $\bar{\A}$ are mutually proportional. Without loss of generality, say
	\[
	\bar{\ell}_3=d_3x_2,\ldots,\bar{\ell}_s=d_sx_2,\quad d_i\in\K\setminus\{0\},\; 3\leq i\leq s
	\]
	and
	\[
	\bar{\ell}_j=d_jx_2+L_j,\quad d_j\in \K, 0\neq L_j\in\K[x_3,\ldots,x_k],\, 4\leq j\leq n.
	\]
	
	Then $$J=\langle x_2(y_2-d_3y_3),\ldots,x_2(y_2-d_sy_s),x_2y_2-\bar{\ell}_{s+1}y_{s+1},\ldots,x_2y_2-\bar{\ell}_ny_n\rangle.$$
	
	We now provide the following estimates:
	\begin{enumerate}
		\item[(a)] ${\rm pdim}_{T'}(T'/\langle x_2,J\rangle)\leq 1+(n-s).$
		\item[(b)] ${\rm pdim}_{T'}(T'/\langle J:x_2\rangle)\leq n'-1.$
	\end{enumerate}
	
For (a), note that $\langle x_2,J\rangle=\langle x_2,L_{s+1}y_{s+1},\ldots,L_ny_n\rangle,$ while from the proof of Lemma~\ref{conjecture} we have $${\rm pdim}_{T'}(T'/\langle x_2,J\rangle)\leq 1+(n-s),$$ since $x_2$ is a nonzero divisor in $T'/\langle L_{s+1}y_{s+1},\ldots,L_ny_n\rangle$.
	
\smallskip

As for (b),  we first claim that $J:x_2=\langle y_2-d_3y_3,\ldots,y_2-d_sy_s,x_2y_2-\bar{\ell}_{s+1}y_{s+1},\ldots,x_2y_2-\bar{\ell}_ny_n\rangle$.
The proof is pretty much the same as that of the equality in (\ref{colon_calculation}), hence will be omitted.

	
	
	This equality implies that
	\[
	{\rm pdim}_{T'}\left(\frac{T'}{J:x_2}\right)=s-2+{\rm pdim}_{T''}\left(\frac{T''}{\langle x_2y_2-\bar{\ell}_{s+1}y_{s+1},\ldots,x_2y_2-\bar{\ell}_ny_n\rangle}\right),
	\] where $T'':=\K[x_2,\ldots,x_k;y_2,y_{s+1},\ldots,y_n]$.
	
	Let $\mathcal B:=\{x_2,\bar{\ell}_{s+1},\ldots,\bar{\ell}_n\}$. With same reasoning for $\mathcal B$ as for $\bar{\mathcal A}$ (i.e., removing proportional linear forms), we obtain
	\[
	{\rm pdim}_{T''}\left(\frac{T''}{\langle x_2y_2-\bar{\ell}_{s+1}y_{s+1},\ldots,x_2y_2-\bar{\ell}_ny_n\rangle}\right)\leq (n-s+1)-1=n-s,
	\] and therefore $${\rm pdim}_{T'}\left(\frac{T'}{J:x_2}\right)\leq s-2+n-s=n-2=n'-1.$$
	
	\medskip
	
Drawing on the estimates (a) and (b) above, the	exact sequence of $T'$-modules $$0\longrightarrow T'/(J:x_2)\longrightarrow T'/J\longrightarrow T'/\langle x_2,J\rangle\longrightarrow 0,$$ gives that $${\rm pdim}_{T'}(T'/J)\leq\max\{n'-1,2+n'-s\}\leq n'-1,$$ since $s\geq 3$.
	
Rolling all the way back to (\ref{pdim_oneplus}), we have proved that
$${\rm pdim}_T\left(\frac{T}{\langle x_1,\mathcal I_1(\A',n'-1)\rangle}\right)\leq 1+(n'-1)=n',$$
as required.
\end{proof}

The main result now follows quite smoothly.

\medskip
\newpage

\begin{thm}\label{thm:CM} The Rees algebra of $I_{n-1}(\A)$ is Cohen-Macaulay.
\end{thm}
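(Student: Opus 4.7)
The plan is to exploit the short exact sequence provided by Corollary~\ref{Rees_colon}. Fixing any linear form $\ell_i \in \A$ and the corresponding variable $y_i$, the equality $\mathcal I(\A,n-1) = \mathcal I_1(\A,n-1) : \ell_i y_i$ yields the short exact sequence of bigraded $T$-modules
\[
0 \longrightarrow \mathcal R(I)(-1,-1) \xrightarrow{\cdot \, \ell_i y_i} \mathcal S(I) \longrightarrow \mathcal S(I)/(\ell_i y_i)\mathcal S(I) \longrightarrow 0,
\]
where the shift $(-1,-1)$ reflects the bidegree of $\ell_i y_i$. The injectivity of the left-hand map follows directly from the colon identity, and the cokernel identification is immediate.

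Next I would identify the rightmost module explicitly. Since $\mathcal I_1(\A,n-1) = \langle \ell_j y_j - \ell_{j+1} y_{j+1} : 1 \leq j \leq n-1 \rangle$ by Lemma~\ref{sym_ideal}, adjoining the single element $\ell_i y_i$ forces $\ell_1 y_1,\ldots,\ell_n y_n$ to all vanish, and conversely any such relation subsumes those above. Hence
\[
\mathcal S(I)/(\ell_i y_i)\mathcal S(I) \;=\; T/\langle \ell_1 y_1, \ldots, \ell_n y_n \rangle,
\]
and Lemma~\ref{conjecture} furnishes the estimate $\mathrm{depth}\,\bigl(\mathcal S(I)/(\ell_i y_i)\mathcal S(I)\bigr) \geq k$.

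To conclude, I would combine this bound with Proposition~\ref{proj_symmetric}, which gives $\mathrm{depth}\,\mathcal S(I) = k+1$. The standard depth lemma applied to the short exact sequence above then yields
\[
\mathrm{depth}\,\mathcal R(I) \;\geq\; \min\!\bigl(\mathrm{depth}\,\mathcal S(I),\; \mathrm{depth}\,(\mathcal S(I)/(\ell_i y_i)\mathcal S(I)) + 1\bigr) \;\geq\; \min(k+1,\, k+1) \;=\; k+1.
\]
Since $\dim \mathcal R(I) = k+1$, equality holds and $\mathcal R(I)$ is Cohen-Macaulay.

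The hard work has already been carried out in the preparatory results: Proposition~\ref{proj_symmetric} pinning down the exact depth of the symmetric algebra, and Lemma~\ref{conjecture} bounding the depth of a ``diagonal'' monomial-times-linear ideal, both of which required delicate inductive arguments. Given these and the fiber-type theorem (which powers Corollary~\ref{Rees_colon}), the Cohen-Macaulayness of $\mathcal R(I)$ follows by the depth-chase above with no further obstacle.
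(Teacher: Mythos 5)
Your proposal is correct and follows essentially the same route as the paper: the short exact sequence coming from $\mathcal I(\A,n-1)=\mathcal I_1(\A,n-1):\ell_iy_i$, the identification of the cokernel with $T/\langle \ell_1y_1,\ldots,\ell_ny_n\rangle$ via the domino effect, and the depth chase using Proposition~\ref{proj_symmetric} and Lemma~\ref{conjecture}. Nothing is missing.
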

\begin{proof} From Corollary\ref{Rees_colon} we have the following short exact sequence of $T$-modules
\[
0\longrightarrow\frac{T}{\mathcal I(\A,n-1)}\longrightarrow \frac{T}{\mathcal I_1(\A,n-1)}\longrightarrow \frac{T}{\langle \mathcal I_1(\A,n-1),\ell_1y_1\rangle}\longrightarrow 0.
\]

By Proposition~\ref{proj_symmetric}, the depth of the middle module is $k+1$, while by Lemma~\ref{conjecture} that of the right most module is at least $k$ -- in fact, by the domino effect one has $\langle \mathcal I_1(\A,n-1),\ell_1y_1\rangle=\langle \ell_1y_1,\ell_2y_2,\ldots,\ell_ny_n\rangle$.
By standard knowledge, the depth of the left most module is at least that of the middle module, namely, $k+1$.
\end{proof}

\medskip

A consequence is an alternative proof of a result of Proudfoot-Speyer (\cite{PrSp}):

\begin{cor}
 Let $\A$ be any central arrangement. Then the associated Orlik-Terao algebra  is Cohen-Macaulay.
\end{cor}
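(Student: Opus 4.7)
The plan is to stitch together the paper's earlier results so that this corollary becomes essentially immediate. Theorem~\ref{ot_specialfiber} yields a graded $\K$-algebra isomorphism $\OT(\A)\simeq \mathcal F(I)$, where $I:=I_{n-1}(\A)$ is the ideal of $(n-1)$-fold products, so it is enough to prove that the special fiber $\mathcal F(I)$ is Cohen-Macaulay. Theorem~\ref{thm:CM} has just shown that the Rees algebra $\mathcal R(I)$ is Cohen-Macaulay, so what remains is the descent of Cohen-Macaulayness along the natural surjection $\mathcal R(I)\twoheadrightarrow \mathcal F(I)=\mathcal R(I)/\fm\mathcal R(I)$.

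This descent is exactly the Simis-Vasconcelos result cited in the introduction: under suitable numerical hypotheses (controlled by the analytic spread), Cohen-Macaulayness passes from the Rees algebra to the special fiber. In the present setting the relevant hypothesis is that $\ell(I)=k=\dim R$, i.e.\ the analytic spread attains its extreme value. This is already in hand, since Remark~\ref{OTproperties}~(iii) realizes $\OT(\A)$ as the subalgebra $\K[1/\ell_1,\dots,1/\ell_n]$ of $\K(x_1,\dots,x_k)$ of Krull dimension $k$; combined with the isomorphism of Theorem~\ref{ot_specialfiber}, this gives $\ell(I)=\dim \mathcal F(I)=k$.

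Once Cohen-Macaulayness of $\mathcal F(I)$ is obtained this way, transporting it through the isomorphism of Theorem~\ref{ot_specialfiber} yields the Cohen-Macaulayness of $\OT(\A)$. The only real obstacle I anticipate is checking that the precise formulation of the Simis-Vasconcelos criterion one wishes to invoke fits the data exactly, but the situation is favorable: $I$ is perfect of codimension two with linear resolution by Lemma~\ref{sym_ideal}, $\mathcal R(I)$ is Cohen-Macaulay by Theorem~\ref{thm:CM}, and the analytic spread is maximal. All of the genuine work has been done by Theorem~\ref{thm:CM} and the inductive deletion/contraction analysis of Section~\ref{Basics_on_(n-1)}, so the corollary itself should follow essentially by citation.
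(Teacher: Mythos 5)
Your proposal is correct and follows essentially the same route as the paper: identify $\OT(\A)$ with the special fiber $\mathcal F(I)$ via Theorem~\ref{ot_specialfiber}, invoke Theorem~\ref{thm:CM} for the Cohen-Macaulayness of $\mathcal R(I)$, and descend to $\mathcal F(I)$ by the unpublished Simis--Vasconcelos transfer result. The only slight discrepancy is the hypothesis you attribute to that transfer: the paper invokes it under the condition that $I$ is generated in a single degree (so that $\mathcal F(I)\simeq\K[I_dt]$ is a direct summand of $\mathcal R(I)$ with complement $\fm\mathcal R(I)$), rather than maximality of the analytic spread.
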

\begin{proof}
As we have seen, the Orlik-Terao algebra is the special fiber of the ideal $I_{n-1}(\A)$. Since the latter is a homogeneous ideal generated in one single degree, then the Cohen-Macaulay property of the Rees algebra transfers to its direct summand, the special fiber -- this result has been communicated by W. Vasconcelos and the second author as part of an ongoing joint work.
\end{proof}

\vskip .2in

\renewcommand{\baselinestretch}{1.0}
\small\normalsize 


\begin{thebibliography}{10}

\bibitem{AnGaTo}
    B. Anzis, M. Garrousian and \c{S}. Toh\v{a}neanu,
    \emph{Generalized star configurations and the Tutte polynomial}, arXiv preprint arXiv:1604.01311 (2016).

\bibitem{ArNa} M. Artin and M. Nagata, {\em Residual intersections in Cohen-Macaulay rings}, J. Math. Kyoto Univ. {\bf 12} (1972), 307--323.

\bibitem{Be}
    A. Berget,
    \emph{Products of linear forms and Tutte polynomials},
    European J. Combin. \textbf{31} (2010), 1924--1935.
    
\bibitem{Corsini}{P. Corsini, {\em Sulle potenze di un ideale generato da una A-successione}, Rend. Sem. Mat. Univ. Padova {\bf 39} (1967), 190--204.}

\bibitem{DoHaSi}
        A.V. Doria, S.H. Hassanzadeh and A. Simis,
        {\em A characteristic-free criterion of birationality},
        Adv. in Math. {\bf 230} (2012), 390--413.

\bibitem{GeHaMi} A.V. Geramita, B. Harbourne and J. Migliore,
        {\em Star configurations in $\mathbb P^n$},
        J. Algebra {\bf 376} (2013), 279--299.


\bibitem{Trento}
        J. Herzog, A. Simis, W.V. Vasconcelos, {\em Koszul homology and blowing-up rings}, in: Commutative Algebra, Proceedings, Trento, 1981, in: Lect. Notes Pure Appl. Math., vol.84, Marcel Dekker, 1983, pp.79–-169.

\bibitem{HoSiVa}
J. Hong, A. Simis and W. V. Vasconcelos,
    {\em The equations of almost complete intersections},
    Bull. Braz. Math. Soc. (New Series) {\bf 43} (2012), 171--199.


\bibitem{MoUr}
S. Morey and B. Ulrich,
    {\em Rees algebras of ideals of low codimension},
    Proc. Amer. Math. Soc. {\bf 124} (1996), 3653--3661.


\bibitem{OrTe}
    P. Orlik and H. Terao, Arrangements of Hyperplanes, Grundlehren Math. Wiss., Bd. 300,
Springer-Verlag, Berlin-Heidelberg-New York, 1992.

\bibitem{OrTe2}
    P. Orlik and H. Terao,
    {\em Commutative algebras for arrangements},
    Nagoya Math. J. {\bf 134} (1994), 65--73.

\bibitem{PrSp}
N. Proudfoot and D. Speyer, \emph{A broken circuit ring}, Beitr\"age
  Algebra Geom. \textbf{47} (2006), 161--166.

\bibitem{RoVa}
L. Robbiano and G. Valla,
    {\em Hilbert-Poincar\'{e} series of bigraded algebras},
    Bollettino U.M.I. (8) \textbf{1-B} (1998), 521--540.

\bibitem{Sc}
H. Schenck, \emph{Resonance varieties via blowups of $\mathbb P^2$ and scrolls}, Int. Math. Res. Not. {\bf 20} (2011), 4756--4778.

\bibitem{ScTo}
H. Schenck and \c{S}. Toh\v{a}neanu, \emph{The Orlik-Terao
	algebra and 2-formality}, Math. Res. Lett. \textbf{16} (2009),
171--182.

\bibitem{jac_dual} A. Simis, B. Ulrich and W. Vasconcelos, {\em Jacobian dual fibrations}, Amer. J. Math. {\bf 115} (1993), 47--75.


\bibitem{Te}
H. Terao,
\emph{Algebras generated by reciprocals of linear forms}, J.
  Algebra \textbf{250} (2002), 549--558.

\bibitem{To1} \c{S}. Toh\v{a}neanu,
        {\em On the De Boer-Pellikaan method for computing minimum distance},
        J. Symbolic Comput. {\bf 45}(2010), 965--974.

\bibitem{Va}
W. Vasconcelos, Integral closure, Springer Monographs in Mathematics, Springer-Verlag, Berlin, 2005, Rees algebras, multiplicities, algorithms.


\end{thebibliography}

\bibliographystyle{amsalpha}

\end{document}